\documentclass[10pt]{article}
\usepackage[a4paper, top=2cm, bottom=2cm, left=.3cm, right=.3cm]{geometry}
\usepackage{enumitem} 
\usepackage[nottoc,notlot,notlof]{tocbibind}
\usepackage{amsmath,amssymb,amsfonts,amsthm}
\usepackage{latexsym}
\usepackage{tikz}
\usepackage[normalem]{ulem}
\usetikzlibrary{automata,positioning}
\usetikzlibrary{chains,fit,shapes}
\usetikzlibrary{calc}
\usetikzlibrary{arrows}
\usepackage{float}
\usetikzlibrary{positioning,calc}
\usetikzlibrary{graphs}
\usetikzlibrary{graphs.standard}
\usetikzlibrary{arrows,decorations.markings}
\usepackage{tikz}
\usetikzlibrary{decorations.pathreplacing}
\usepackage{multicol}
\usepackage{xcolor}
\usepackage{hyperref}
\usepackage{abstract}
\usepackage{url}
\usepackage{ragged2e}
\usetikzlibrary{positioning}
\newtheorem{theorem}{Theorem}
\newtheorem{remark}{Remark}

\newtheorem{lemma}{Lemma}[section]

\theoremstyle{definition}

\newtheorem{observation}{Observation}[section]

\numberwithin{equation}{section}
\date{}
\title{Characterization of Word-Representable Near-Triangulations}

\author{\hspace{1cm} Suchanda Roy and Ramesh Hariharasubramanian \\ 
{{\footnotesize r.suchanda@iitg.ac.in},\ {\footnotesize  ramesh\_h@iitg.ac.in}}\\{\footnotesize Department of Mathematics, Indian Institute of Technology Guwahati, Guwahati, Assam 781039, India}}

\begin{document}
	\maketitle

	\begin{abstract}
 

A graph \(G=(V,E)\) is said to be \emph{word-representable} if there exists a word \(w\) over the alphabet \(V\) such that two distinct letters \(x,y\in V\) alternate in \(w\) if and only if \(xy\in E\). Word-representable graphs form a well-studied graph class with connections to graph orientations, combinatorics on words, and graph coloring.

A \emph{near-triangulation} is a planar graph in which every face except the outer face is a triangle. Several subclasses of near-triangulations have previously been investigated in the context of word-representability, including polyomino triangulations, triangulations of rectangular polyominoes with a single domino tile, \(K_4\)-free near-triangulations, face subdivisions of triangular grid graphs, triangulations of grid-covered cylinder graphs, and chordal near-triangulations.

In this paper, we obtain a complete characterization of word-representable near-triangulations in terms of forbidden induced subgraphs. Our result unifies and extends the previously known characterizations for the above subclasses, while also correcting inaccuracies appearing in earlier works.
\\
\noindent\textbf{Keywords:} semi-transitive orientation, near-triangulation, minimal forbidden subgraph, word-repre-\\sentable graph.\end{abstract}

	\maketitle
	\pagestyle{myheadings}
\section{Introduction}
The theory of \emph{word-representable graphs} has developed into a rich and active area of research with strong connections to algebra, graph theory, combinatorics on words, formal languages, and scheduling problems. The concept originated from the work of \textit{Kitaev} and \textit{Seif} on the celebrated \emph{Perkins semigroup}~\cite{kitaev2008word}, and was later formally introduced by \textit{Kitaev} and \textit{Pyatkin} in~\cite{kitaev2008representable}. A graph \(G=(V,E)\) is said to be \emph{word-representable} if there exists a word \(w\) over the alphabet \(V\) such that two distinct letters \(x,y\in V\) alternate in \(w\) if and only if \(xy\in E\). This graph class, which generalizes important classes such as circle graphs, comparability graphs, and 3-colorable graphs, has been extensively studied in monographs~\cite{kitaev2017comprehensive,kitaev2015words}. Despite extensive progress in the area, recognizing whether a graph is word-representable remains an NP-complete problem.

An orientation of a graph is \emph{transitive} if the presence of \( u \rightarrow v \) and \( v \rightarrow z \) implies \( u \rightarrow z \). Graphs that admit such an orientation are called comparability graphs.

An orientation of a graph is said to be \textit{semi-transitive} if it is acyclic and satisfies the following condition: for any directed path \( u_1 \rightarrow u_2 \rightarrow \cdots \rightarrow u_t \) with \( t \geq 2 \), either there is no edge from \( u_1 \) to \( u_t \), or the subgraph induced by the vertices \( u_1, \ldots, u_t \) forms a transitive tournament, i.e, all possible edges \( u_i \rightarrow u_j \) exist for every \( i < j \). An undirected graph is said to be semi-transitive if it can be oriented in such a way that the resulting digraph is semi-transitive. This notion of semi-transitive orientation was introduced in~\cite{HALLDORSSON2016164} as a tool for characterizing \textit{word-representable graphs} (Theorem \ref{ch}). 

Word-representability of planar graphs remains one of the most challenging problems in this area. Since all triangle-free planar graphs and planar graphs containing at most three triangles are known to be word-representable (Theorems~\ref{1}--\ref{3}), recent investigations have focused primarily on planar graph classes rich in triangles. In this direction, several subclasses have been studied, including polyomino triangulations~\cite{akrobotu2015word}, triangulations of rectangular polyominoes with a single domino~\cite{glen2015word}, \(K_4\)-free near-triangulations~\cite{glen2016colourability}, face subdivisions of triangular grid graphs~\cite{chen2016word}, triangulations of grid-covered cylinder graphs~\cite{chen2016word1}, and chordal near-triangulations~\cite{hauser2025word}.

Most of these graph classes are subclasses of \emph{near-triangulations}, that is, planar graphs in which every face except the outer face is a triangle. In this paper, we obtain a complete characterization of word-representable near-triangulations in terms of forbidden induced subgraphs, thereby unifying and generalizing all previously known characterizations for the subclasses mentioned above. In addition, we correct two inaccurate results appearing in earlier works. From another perspective, our characterization also yields a collection of forbidden induced subgraphs whose exclusion guarantees the closure of word-representability under \(W\)-components introduced in~\cite{salam2022chordal}.

The remainder of the paper is organized as follows. Section~\ref{preli} reviews the necessary definitions and known results regarding word-representability and near-triangulations. Section~\ref{result} contains our main results, and Section~\ref{con} concludes the paper with some directions for future research.
\section{Preliminaries}\label{preli}
One of the key developments in the study of word-representable graphs is their characterization in terms of semi-transitive orientations, which forms the basis for many subsequent results. We begin this section by stating the fundamental theorem below. For a comprehensive account of the theory and related results, see~\cite{akrobotu2015word,glen2015word,glen2016colourability,chen2016word,chen2016word1,hauser2025word}.
\begin{theorem}\cite{HALLDORSSON2016164}\label{ch} A graph is word-representable if and only if it admits a semi-transitive orientation.
\end{theorem}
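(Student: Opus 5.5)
We prove the two directions separately. In both we make repeated use of one standard fact about word-representants: any word-representable graph has a \emph{uniform} representant, in which every letter occurs the same number $k$ of times. Such a word is called $k$-uniform.

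\textbf{From a word to an orientation.} Suppose $w$ represents $G$, and take $w$ to be $k$-uniform. Orient each edge $xy$ from $x$ to $y$ when the first occurrence of $x$ in $w$ precedes the first occurrence of $y$. This orientation is induced by a linear order on first occurrences, so it is acyclic.

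For semi-transitivity, let $u_1\to u_2\to\cdots\to u_t$ be a directed path with the edge $u_1\to u_t$ present. Because adjacent letters alternate and the $k$-uniform word starts with $u_i$ before $u_{i+1}$, the $j$-th occurrence of $u_i$ precedes the $j$-th occurrence of $u_{i+1}$ for every $j$. By transitivity along the path, the $j$-th occurrence of $u_1$ precedes that of $u_t$. Since $u_1u_t$ is an edge, the letters also alternate, so the $(j+1)$-th occurrence of $u_1$ comes after the $j$-th occurrence of $u_t$.

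Putting this together, for every $j$ and all $a<b$ the occurrences are interleaved as
\[
u_a^{(j)} < u_b^{(j)} < u_t^{(j)} < u_1^{(j+1)} \le u_a^{(j+1)} .
\]
Hence $u_a$ and $u_b$ alternate, so $u_au_b\in E$ with orientation $u_a\to u_b$. The path therefore induces a transitive tournament, and the orientation is semi-transitive.

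\textbf{From an orientation to a word.} Conversely, fix a semi-transitive orientation of $G$ and a topological order $v_1,\dots,v_n$ of it. We build the word inductively, keeping a uniform word $w_i$ that represents $G[v_1,\dots,v_i]$ and whose first occurrences follow the orientation.

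To insert $v=v_{i+1}$, note that its neighbours among the earlier vertices are all in-neighbours, say the set $I$. The approach is to first replace $w_i$ by a suitable equivalent word, using cyclic shifts and repetition (for example, concatenations of permutations). In that word we then place copies of $v$ so that between any two consecutive copies of $v$ each letter of $I$ occurs exactly once and each non-neighbour either does not alternate with $v$ or occurs twice.

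Equivalently, one can aim directly for a $k$-uniform word built as a concatenation of topological sorts $p_1p_2\cdots p_k$ of carefully modified orientations. Each $p_j$ is obtained from the previous sort by moving some sources to the end. For a non-adjacent pair $x,y$, semi-transitivity is used to find a stage where the relative order of $x$ and $y$ flips in a way that breaks their alternation. For an adjacent pair, semi-transitivity guarantees the order never flips, so alternation is preserved.

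\textbf{Main obstacle.} The hard step is the converse direction. Specifically, we must show that non-edges get their alternation broken without destroying the alternation of any edge. Here semi-transitivity is essential. A non-edge $x\dots y$ whose endpoints are joined by a directed path cannot be a shortcut, because a shortcut is exactly the configuration the definition forbids unless the whole path induces a transitive tournament.

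I would carry this out by induction on the number of non-adjacent pairs still alternating. For a chosen non-edge, I would take a sort $p$ and shift the set of vertices reachable from $x$ (within a suitable "down-set" construction) to obtain a new sort $p'$. Appending $p'$ then breaks the alternation of $x$ and $y$. The semi-transitivity condition on paths is precisely what ensures that this down-set is closed with respect to all edges, so every adjacent pair keeps alternating.
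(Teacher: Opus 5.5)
Your first direction (word $\Rightarrow$ orientation) is correct and is the standard argument. You take a uniform representant, orient by first occurrences, and use the chord $u_1u_t$ to force $u_t^{(j)}<u_1^{(j+1)}$, which interleaves every pair on the path. The paper itself gives no proof of this theorem; it only quotes it from \cite{HALLDORSSON2016164}.

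\textbf{The gap is in the converse.} The only concrete mechanism you describe is to concatenate (modified) topological sorts $p_1p_2\cdots p_k$ and to break a non-edge by appending a re-sorted $p'$. This cannot work. In a concatenation of permutations of $V$, two letters alternate if and only if they occur in the same relative order in every block. Such words therefore represent only comparability graphs. For example, $C_5$ oriented $5\to1\to2$, $3\to2$, $3\to4$, $5\to4$ has no directed path on four vertices, so this orientation is semi-transitive, yet $C_5$ is not a comparability graph.

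The failure occurs exactly at the non-edges you single out as hard. Suppose $x=z_0\to z_1\to\cdots\to z_m=y$ with $xy\notin E$. Any block placing $y$ before $x$ must reverse some $z_jz_{j+1}$ relative to the previous block, and that edge then stops alternating. So no choice of shifted ``down-set'' makes the shift harmless for all edges. Your insertion variant only restates the goal. That some representant of $G[v_1,\dots,v_i]$ admits the required placement of copies of $v_{i+1}$ is exactly what must be proved, and you give no argument for it.

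\textbf{What is missing} is a representant whose copies are interleaved rather than grouped into permutation blocks, together with the specific consequences of semi-transitivity that make this possible. Fix a topological sort $P$ and a vertex $x$. Let $N^-$ and $N^+$ be the in- and out-neighbours of $x$, $I'$ the ancestors of $x$ not in $N^-$, $O'$ the descendants not in $N^+$, and $A$ the vertices incomparable with $x$.

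An edge $i\to v$ with $i\in I'$ and $v\in N^+\cup O'$ would close a directed path $i\to\cdots\to x\to\cdots\to v$. Semi-transitivity would then force $i\to x$, a contradiction. Likewise an edge $u\to o$ with $u\in N^-$ and $o\in O'$ would force $x\to o$. The reverse directions of these edges create cycles. Hence there are no edges between $I'$ and $N^+\cup O'$, nor between $N^-$ and $O'$. Acyclicity, the definitions of the classes, and the same shortcut argument orient every other inter-class edge as $I'\to N^-$, $I'\to A$, $N^-\to A$, $N^-\to N^+$, $A\to N^+$, $A\to O'$, $N^+\to O'$.

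Write $P_S$ for the restriction of $P$ to $S$, and consider the $2$-uniform word
\[
w_x = P_{I'}\,P_{N^-}\,P_A\,P_{I'}\;x\;P_{N^+}\,P_{N^-}\;x\;P_{O'}\,P_A\,P_{N^+}\,P_{O'} .
\]
It restricts to $uvuv$ on every edge $u\to v$. Within a class this holds because the two blocks have the same order; across classes it follows by checking positions. Meanwhile $x$ forms the non-alternating patterns $iixx$, $axxa$, $xxoo$ with the vertices of $I'$, $A$, $O'$. The concatenation of the words $w_x$ over all vertices $x$ is therefore a $2n$-uniform word representing $G$, in line with the bound obtained in \cite{HALLDORSSON2016164}.
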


As a corollary of this, \emph{Halldórsson et al.} proved the following theorem.
\begin{theorem}\label{1}\cite{HALLDORSSON2016164}
Every $3$-colorable graph is word-representable.
\end{theorem}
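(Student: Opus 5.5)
By Theorem~\ref{ch}, it suffices to show that every $3$-colorable graph $G=(V,E)$ admits a semi-transitive orientation. The plan is to orient the edges of $G$ using a proper $3$-coloring, so that every directed path is forced to be very short. We then check the two defining conditions, acyclicity and the shortcut condition, directly.

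First, fix a proper coloring $c\colon V\to\{1,2,3\}$. Orient each edge $uv\in E$ as $u\to v$ whenever $c(u)<c(v)$; this is well defined because adjacent vertices receive distinct colors. Along any directed path $u_1\to u_2\to\cdots\to u_t$ the colors strictly increase, $c(u_1)<c(u_2)<\cdots<c(u_t)$. Hence the orientation has no directed cycle, so it is acyclic. Moreover, every directed path has at most three vertices, that is, $t\le 3$.

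Second, verify the shortcut condition. Take a directed path $u_1\to\cdots\to u_t$ with $t\ge 2$ together with an edge from $u_1$ to $u_t$.
\begin{itemize}
\item If $t=2$, the induced subgraph is a single arc, which is trivially a transitive tournament.
\item If $t=3$, the arcs $u_1\to u_2$, $u_2\to u_3$ and $u_1\to u_3$ are all present. The vertices therefore induce a transitive triangle, again a transitive tournament.
\end{itemize}
Longer paths do not occur by the first step, so no violating configuration exists. The orientation is thus semi-transitive, and Theorem~\ref{ch} shows that $G$ is word-representable.

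I do not expect a genuine obstacle here. The only point needing care is the observation that the color-increasing orientation bounds path length by the number of colors. This bound is what makes the shortcut condition vacuous beyond length three, and it is exactly why the argument fails for $4$ or more colors.
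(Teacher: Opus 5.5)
Your proof is correct and is essentially the intended argument. The paper gives no proof of its own and only records the statement as a corollary of Theorem~\ref{ch} (Halldórsson et al.), whose proof orients each edge from the lower to the higher color class; every directed path then has at most three vertices, so no violating configuration can occur. Your explicit check of the cases $t=2$ and $t=3$ against the paper's $t\ge 2$ formulation of semi-transitivity is a faithful rendering of that argument.
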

The above theorem connects word-representability with graph coloring. Together with well-known coloring results for planar graphs, it yields several important classes of word-representable planar graphs, as stated in the following theorems.

\begin{theorem}[Grötzsch]\label{2}\cite{thomassen2003short}
Every triangle-free planar graph is $3$-colorable.
\end{theorem}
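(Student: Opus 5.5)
The statement is Grötzsch's theorem: every triangle-free planar graph is $3$-colorable. The paper cites it from the literature (Thomassen's short proof), so what follows is a sketch of that argument rather than a new proof. I would argue by minimal counterexample, proving a stronger statement that is amenable to induction.

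\textbf{Strengthened statement.} Let $G$ be a plane graph of girth at least $5$, with outer face bounded by a cycle $C$ of length at most $9$ (or a path/vertex in degenerate cases). Then any proper $3$-coloring of $C$ extends to $G$, provided $C$ is induced and the coloring is not one of a few small obstructed configurations.

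This handles girth $5$. The general triangle-free case reduces to it by treating $4$-cycles separately. If $G$ contains a facial $4$-cycle $uvwx$, we identify two opposite vertices $u$ and $w$. This keeps the graph planar, and a $3$-coloring of the quotient lifts to $G$. The identification must not create a triangle. If it would, there is a path of length $2$ between $u$ and $w$ outside the face, and then the opposite pair $v,x$ can be identified instead. Planarity forbids both pairs being blocked, since the two connecting paths would have to cross. Separating $4$-cycles are handled by splitting along them and coloring inside and outside consistently, since all colorings of a $4$-cycle up to permutation are few. Iterating leaves a triangle-free plane graph with no $4$-cycles, i.e.\ girth at least $5$, or a small base case.

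\textbf{Girth-$5$ case.} Take a minimal counterexample to the strengthened statement. Minimality first gives structure:
\begin{itemize}
\item $G$ is $2$-connected.
\item $G$ has no chords of $C$.
\item $G$ has no separating short cycles, because cutting along them and applying induction to each side (inner side first, using the fixed coloring of the cycle) would extend the coloring.
\item Every interior vertex has degree at least $3$.
\end{itemize}

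Euler's formula with all faces of length at least $5$ then forces many low-degree vertices near the outer face. Concretely, there is a path $P$ on $C$, or a vertex adjacent to $C$, whose removal lets the coloring be pre-extended by a greedy step: delete the colored vertex and reduce the lists of its neighbors. Induction then applies to the smaller graph with new outer cycle $C'$, which we check still has length at most $9$ and girth constraints intact.

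\textbf{Main obstacle.} The case analysis in the girth-$5$ step is the delicate part. One must show that a reducible configuration always exists, and that the new outer boundary after deletion does not exceed length $9$ or create a bad precoloring. Thomassen's trick is to phrase the induction with list-type conditions: outer vertices precolored, and a specified path of vertices with lists of size $2$. This makes the discharging/Euler counting essentially a one-line argument. I would adopt that formulation first, verifying the base case where $G=C$, and then the two reduction moves: deleting a degree-$2$ vertex on $C$, and contracting along a short path through the interior.
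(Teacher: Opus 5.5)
The paper gives no proof of this statement; it is quoted from Thomassen's short proof. Your sketch therefore has to stand on its own, and it does not. All the content of the theorem is in the girth-$5$ step, and there you neither fix a usable induction hypothesis nor prove one.

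Saying that every proper $3$-coloring of an induced outer cycle of length at most $9$ extends ``unless it is one of a few small obstructed configurations'' leaves the exceptions unspecified. They also cannot be described by the coloring of $C$ alone. Suppose an inner vertex is adjacent to three vertices of a $9$-cycle $C$ that received three different colors. Then that coloring fails to extend, yet the same coloring extends trivially when $G=C$. The precise length-$\le 9$ statement is a much harder precoloring-extension theorem with interior obstructions, not what the short proof establishes.

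Consequently, your minimality claims are unjustified. For example, ruling out short separating cycles needs the coloring induced on such a cycle to be non-excluded, which you cannot guarantee. The existence of a reducible configuration, and the check that the reduced instance again satisfies the hypothesis, are only asserted; you flag this yourself as the main obstacle.

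Your description of Thomassen's formulation is also inverted. There, a path or cycle $P$ of at most six outer vertices is precolored. The remaining outer vertices have lists of size at least $2$, subject to independence and non-adjacency-to-$P$ conditions, and interior vertices have lists of size $3$. The proof is a direct minimal-counterexample induction that removes outer vertices and shrinks their neighbors' lists, with girth $5$ preserving the side conditions. It uses no Euler-formula counting.

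The reduction from triangle-free to girth $5$ also has a hole. First, identifying opposite vertices $u,w$ of a $4$-face creates a triangle exactly when there is a $u$--$w$ path of length $3$ avoiding the face. A common neighbor of $u$ and $w$ only produces a parallel edge. With that correction your crossing argument is fine: the two length-$3$ paths must share a vertex, and any shared vertex yields a triangle.

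The real gap is separating $4$-cycles. Opposite vertices of such a cycle need not share a face, so identification is unavailable. The fact that a $4$-cycle has only three coloring patterns up to permutation does nothing to make a coloring of the inside agree with one of the outside. What you need is that every $3$-coloring of a $4$-cycle bounding the outer face of a triangle-free plane graph extends inward. This must be carried in the induction hypothesis; the classical form is that every $3$-coloring of an outer cycle of length at most $5$ extends. Your identification step must then also respect the precolored outer cycle.

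Your girth-$5$ statement cannot supply this, because the inner piece has a $4$-cycle as its outer face and hence girth $4$. Without it, iterated identification can stop at a graph with no $4$-faces but with separating $4$-cycles, so you never actually reach girth $5$.
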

\begin{theorem}\label{3}\cite{aksionovcontinuation}
    Every planar graph with at most $3$-triangles is $3$-colorable.
\end{theorem}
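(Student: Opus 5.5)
The statement is a classical coloring theorem (Grünbaum--Aksenov). I would not reprove Grötzsch's theorem from scratch. Instead I would argue by minimal counterexample and combine Euler's formula with the Kostochka--Yancey lower bound on the number of edges of $4$-critical graphs, in the spirit of the short proof by Borodin, Kostochka, Lidick\'y and Yancey.

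\textbf{Step 1: reduce to a $4$-critical plane graph.} Let $G$ be a plane graph with at most three triangles that is not $3$-colorable, and choose it with $|V(G)|+|E(G)|$ minimum. Every proper subgraph of $G$ still has at most three triangles, so by minimality it is $3$-colorable. Hence $G$ is $4$-critical. In particular $G$ is $2$-connected, has minimum degree at least $3$, and every face is bounded by a cycle. By the Kostochka--Yancey theorem,
\[|E(G)| \ge \frac{5|V(G)|-2}{3}.\]

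\textbf{Step 2: Euler's formula and face sizes.} Write $f_k$ for the number of faces of length $k$. We have $f_3\le 3$, and
\[\sum_k (k-4) f_k = 2|E|-4F = 2|E| - 4(2-|V|+|E|).\]
Substituting the Kostochka--Yancey bound gives
\[\sum_{k\ge 5}(k-4)f_k \le f_3 + \text{(a small constant)}.\]
So almost every face is a $4$-face, and only a bounded number of faces have length at least $5$. A crude count with $f_3\le 3$ alone does not reach a contradiction. The remaining work is therefore to show that a minimal counterexample cannot have many $4$-faces.

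\textbf{Step 3: reduce $4$-faces.} Let $v_1v_2v_3v_4$ be a $4$-face. Identify $v_1$ with $v_3$ (or $v_2$ with $v_4$) to obtain a smaller plane graph $G'$. Any $3$-coloring of $G'$ lifts to a $3$-coloring of $G$, so $G'$ is not $3$-colorable. To contradict minimality we need $G'$ to have at most three triangles; more strongly, we would like the potential $5|V|-3|E|$ to behave well under the identification. A new triangle in $G'$ comes from a path of length $\le 3$ between $v_1$ and $v_3$ outside the face, which yields a separating cycle of length $\le 5$ through the face. I would then do a case analysis.
\begin{enumerate}
\item If neither diagonal identification creates a new triangle, minimality is contradicted directly.
\item Otherwise, both pairs are joined by short external paths. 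These give separating $4$- or $5$-cycles, or extra triangles, near the face.
\end{enumerate}
In the second case I would cut along the short separating cycle $C$. The inside and outside pieces are smaller, so they are $3$-colorable. We then need a coloring of $C$ that extends to both sides, using the facts that a $3$-coloring of a $4$-cycle or $5$-cycle in a triangle-light region extends (a precoloring extension lemma in the style of Thomassen's proof of Grötzsch's theorem). One can also use the standard potential argument: a critical proper subgraph has potential strictly larger than the whole graph, which lets such configurations be collapsed.

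\textbf{Main obstacle.} I expect Step 3 to be the hardest part. The difficulty is showing that identifying opposite vertices of a $4$-face never pushes the triangle count above three without producing a separating short cycle, and that such separating cycles can be handled. The triangles already present, up to three, can sit next to the $4$-faces and interfere with both identifications simultaneously.

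I would first try the Kostochka--Yancey potential technique, which keeps track of $5|V|-3|E|$ under identifications. The reason is that it converts the case analysis into a counting inequality: each of the at most three triangles costs a bounded amount of potential, and Step 2 shows the total budget is exceeded.
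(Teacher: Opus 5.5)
The paper gives no proof of this statement; it is quoted from Aksenov as a known result, so your plan has to stand on its own. Step~1 is fine, and the overall route (minimal counterexample, Kostochka--Yancey plus Euler, then reduce $4$-faces) is the Borodin--Kostochka--Lidick\'y--Yancey strategy. But Step~2 is miscomputed. From $\sum_k (k-4)f_k = 4|V|-2|E|-8$ and $3|E|\ge 5|V|-2$ you only get $\sum_{k\ge 5}(k-4)f_k \le f_3 + (2|V|-20)/3$. This bound grows with $|V|$, so it says nothing like ``almost every face is a $4$-face'' or ``only boundedly many faces have length at least $5$''. The correct weighting is $k-5$: $\sum_k (k-5)f_k = 5|V|-3|E|-10 \le -8$, that is, $2f_3+f_4 \ge 8+\sum_{k\ge 6}(k-5)f_k$. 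With $f_3\le 3$ this forces $f_4\ge 8-2f_3\ge 2$. The precise target is therefore to show that a minimal counterexample has fewer than $8-2f_3$ (for instance, no) $4$-faces.

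The real gap is Step~3, which carries the whole content of the theorem and is only announced. Suppose both diagonal identifications create new triangles. Then the external paths $v_1xyv_3$ and $v_2zwv_4$ must share a vertex, but this only produces a triangle of $G$ on an edge of the face, which is no contradiction when three triangles are allowed. The resulting separating $4$- or $5$-cycle $C$ may also have triangles on both sides. The extension fact you want is true for triangle-free interiors but false once a triangle is present. Take the $5$-cycle $c_1\dots c_5$ coloured $1,2,1,2,3$, and add $v$ adjacent to $c_2,c_5$, $w$ adjacent to $v,c_4$, and $u$ adjacent to $w,c_2,c_3$. This plane graph has the single triangle $uc_2c_3$, yet $v=1$ and $w=3$ are forced and $u$ has no colour. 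So ``cut along $C$ and extend'' needs exact control of which colourings of $C$ extend across a side containing one or two triangles (essentially Aksenov's extension results), or a genuine argument excluding $4$-faces. The closing remark that each triangle ``costs a bounded amount of potential'' is not such an argument.

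Two smaller points: a $v_1$--$v_3$ path of length $2$ creates only a parallel edge, not a triangle; and if $v_1v_3\in E(G)$, identifying them creates a loop, so you must identify the other pair instead.
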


Together, these results show that planar graphs with few triangles pose no obstacle to word-representability. The challenge therefore, lies in understanding planar graphs rich in triangles. In particular, several subclasses of near-triangulations and related planar graph classes have previously been studied from the perspective of word-representability, leading to a variety of partial characterizations and structural results.
A recurring phenomenon in many of these works is the strong connection between word-representability, \(3\)-colorability, and forbidden induced subgraph characterizations.

The following theorems establish this relationship for several subclasses arising from polyomino triangulations and $K_4$- free near-triangulations.
\begin{theorem}\cite{akrobotu2015word}
A triangulation of a convex polyomino is word-representable if and only if it is \(3\)-colorable.
\end{theorem}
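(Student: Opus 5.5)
The easy direction is immediate from Theorem~\ref{1}: if a triangulation of a convex polyomino is \(3\)-colorable, it is word-representable. The work is in the converse. Suppose \(G\), a triangulation of a convex polyomino, is not \(3\)-colorable. I will show that \(G\) contains an induced subgraph that admits no semi-transitive orientation. Since induced subgraphs of word-representable graphs are word-representable, Theorem~\ref{ch} then shows that \(G\) is not word-representable.

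\emph{Step 1: a local obstruction to \(3\)-coloring.} Each unit square of the polyomino is split by one of its two diagonals. A near-triangulation whose inner faces are triangles has an essentially forced \(3\)-coloring: coloring one triangle determines its neighbours across shared edges, because adjacent triangles force the opposite vertex. So \(G\) fails to be \(3\)-colorable exactly when this propagation meets a conflict. Since \(G\) is built from a grid, such a conflict is witnessed around an interior vertex of odd degree. An interior grid vertex has degree \(4+(\text{number of incident diagonals})\), so odd degree means degree \(5\) or \(7\), and the obstruction is a wheel \(W_5\) or \(W_7\). I would check this by analysing the \(2\times 2\) block of squares around an interior vertex. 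In a simply connected near-triangulation the propagation is path-independent once every interior vertex has even degree, since the only monodromy comes from loops around interior vertices. Convexity, or more weakly simple connectivity of the polyomino, is used here to rule out holes.

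\emph{Step 2: the wheel is induced and not semi-transitive.} The neighbourhood of an interior vertex \(v\) in the grid is a cycle whose vertices lie among the eight surrounding grid points. I need to verify that this rim cycle has no chords in \(G\). Any two rim vertices that are not consecutive on the rim are either more than one grid step apart or would be joined only by a diagonal lying outside the \(2\times 2\) block, which is impossible. So the closed neighbourhood induces \(W_5\) or \(W_7\).

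It remains to show that odd wheels \(W_{2k+1}\) with \(k\ge 2\) have no semi-transitive orientation. This is the main obstacle, and I would argue it directly. Take an acyclic orientation, and let \(v\) be the hub. The rim is an odd cycle, so in any orientation it has a vertex that is neither a source nor a sink along the rim; more precisely, the rim contains a directed path of length \(2\). Using this together with acyclicity, I would locate a directed path running through the hub and several rim vertices, say \(u_i \to u_{i+1} \to \dots\), combined with edges to or from \(v\). The endpoints of this path are adjacent, but two of its vertices are non-adjacent rim vertices, and that is a shortcut. Making this precise requires a case split on whether \(v\) is a source, a sink, or neither relative to the rim. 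If \(v\) is a source or a sink, the rim orientation itself must avoid a shortcut with \(v\), which forces the rim to alternate in direction, and alternation is impossible on an odd cycle. If \(v\) is neither, then a rim vertex \(a\) with \(a\to v\) and a rim vertex \(b\) with \(v\to b\) exist. Following the rim from \(b\) back to \(a\) gives a directed path \(a\to v\to b\to\cdots\), and tracing the rim orientation produces a path whose endpoints are adjacent while some intermediate pair of rim vertices is not. I expect to need care here to handle every case.
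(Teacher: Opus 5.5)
Your overall route is sound, but one step of it is missing. The paper only cites this theorem, and your skeleton is the standard one. Theorem~\ref{1} gives sufficiency. For necessity, a non-$3$-colourable hole-free triangulation has an inner vertex of odd degree (the even-degree criterion of Diks et al.\ that the paper uses for Theorem~\ref{k4free}), and its closed neighbourhood induces $W_5$ or $W_7$. Step~1 is sound.

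In Step~2 your chord check is misstated. The only candidate chords join two orthogonal neighbours of $v$ that bound a common cell, say the north and east neighbours, and that diagonal lies \emph{inside} the $2\times 2$ block, not outside it. The correct reason is this: those two vertices are non-consecutive on the rim exactly when the cell's diagonal runs from $v$ to the north-east corner, and a cell carries only one diagonal.

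\textbf{The gap.} The genuine gap is the case where the hub $v$ is neither a source nor a sink. The claim that ``following the rim from $b$ back to $a$ gives a directed path'' has no basis, since the rim is oriented arbitrarily. More seriously, your sketch of this case never uses that the rim is odd, and it must. The even wheel $W_{2k}$ has a semi-transitive orientation with the hub neither source nor sink: orient a proper $3$-colouring from colour $1$ to $2$ to $3$, with the hub coloured $2$. So no local search for a single shortcut can succeed; you need a global parity argument.

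\textbf{Two ways to close it.} First, you can invoke the Kitaev--Pyatkin lemma that every neighbourhood in a word-representable graph induces a comparability graph. Its proof reduces to your source case: cyclically shifting a uniform representant to begin with $v$ gives a semi-transitive orientation in which $v$ is a source.

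Second, you can argue directly. Let $I$ and $O$ be the rim in- and out-neighbours of $v$.
\begin{enumerate}
\item Acyclicity of the triangles at $v$ forces every rim arc between $I$ and $O$ to go from $I$ to $O$.
\item Your source/sink reasoning, combined with acyclicity, shows that any directed rim path $x\to y\to z$ has $x\in I$ and $z\in O$. Hence arcs alternate inside every maximal run of consecutive $I$-vertices or of consecutive $O$-vertices. A single run is your source/sink case.
\item Suppose $a\in I$ and $b\in O$ are consecutive on the rim. If the other rim neighbour $c$ of $b$ lies in $O$, then $b\to c$, since otherwise $a\to v\to c\to b$ is a shortcut for $a\to b$. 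If the other rim neighbour $d$ of $a$ lies in $I$, then $d\to a$, since otherwise $a\to d\to v\to b$ is a shortcut. Both shortcuts use that rim vertices at distance $2$ are non-adjacent, i.e.\ $2k+1\ge 5$.
\item By the previous step, both end arcs of an $I$-run point towards its ends, and both end arcs of an $O$-run point away from them. Together with alternation, this forces every run to have odd length.
\item Runs alternate between $I$ and $O$ around the rim. So when both are non-empty there is an even number of odd runs, the rim has even length, and you get a contradiction.
\end{enumerate}
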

\begin{theorem}\cite{glen2015word}
A triangulation of a rectangular polyomino with a single domino tile is word-representable if and only if it is \(3\)-colorable.
\end{theorem}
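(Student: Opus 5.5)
The implication ``\(3\)-colorable \(\Rightarrow\) word-representable'' is immediate from Theorem~\ref{1}, so all the work is in the converse. I will argue by contraposition: assume the triangulation \(T\) of a rectangular polyomino with a single domino tile is not \(3\)-colorable, and exhibit a subgraph that admits no semi-transitive orientation. By Theorem~\ref{ch}, \(T\) is then not word-representable. Since induced subgraphs of word-representable graphs are word-representable, it suffices to find a ``bad'' induced subgraph.

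\textbf{Reducing non-\(3\)-colorability to a local obstruction.} In a near-triangulation, a proper \(3\)-coloring, if one exists, is forced once a single triangle is colored: it propagates across shared edges. So \(T\) fails to be \(3\)-colorable exactly when some interior vertex has odd degree. Going around an interior vertex \(v\), the neighbours form a cycle, and the two colors not used on \(v\) must alternate on it, which is only possible if the cycle is even. I will first verify this parity criterion for polyomino triangulations, using simple connectivity of the rectangle so that propagation is consistent.

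\textbf{Classifying the odd wheels.} Inside a polyomino triangulation, each square cell is split by one diagonal. The domino is the single \(1\times 2\) tile, triangulated in one of finitely many ways, possibly with interior edges. Hence an interior vertex has degree between \(4\) and about \(8\), and only a bounded number of local configurations occur. An interior vertex of odd degree therefore yields an induced wheel \(W_5\) or \(W_7\), possibly with chords coming from the domino. I will list the finitely many local neighbourhoods of odd-degree interior vertices, namely grid vertices adjacent to the domino and vertices on the domino's boundary or midpoint.

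\textbf{Main obstacle: proving these neighbourhoods are not semi-transitive.} For the odd wheels \(W_{2k+1}\) this is a known fact, proved by a short orientation-forcing argument. Orient the rim acyclically. The hub \(v\) together with consecutive rim vertices forms triangles, each of which must be transitive. Because the rim has odd length, some rim path \(u\to\cdots\to w\) through \(v\)'s neighbours must create a shortcut that is not transitive. Concretely, the rim has a source and a sink, and one then places \(v\) and derives a violation.

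The difficulty is the configurations where the domino adds chords, so the odd cycle around \(v\) is not induced. There I plan to locate a different induced odd wheel nearby, or else run the same forcing argument directly on the small graph; this is a finite case check. In all cases a subgraph with no semi-transitive orientation appears, which completes the contraposition.
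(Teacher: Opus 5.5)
The paper does not prove this statement; it quotes it from \cite{glen2015word}. Your proposal therefore has to stand on its own. The direction via Theorem~\ref{1} and the parity criterion are fine. The converse has a genuine gap: your plan presumes that every obstruction is an odd wheel centred at an odd-degree interior vertex, so that inspecting closed neighbourhoods yields either an induced odd wheel or a non-semi-transitive $N[v]$. Domino chords break this.

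Triangulate the hexagonal face of the domino $[0,2]\times[0,1]$ with the chords $(0,0)(2,0)$, $(0,0)(1,1)$, $(2,0)(1,1)$, the first drawn inside the tile. Give the two cells below the diagonals $(0,0)(1,-1)$ and $(1,-1)(2,0)$.
\begin{itemize}
\item Then $(1,0)$ is an interior vertex of degree $3$, so the graph is not $3$-colorable. Yet $N[(1,0)]=K_4=W_3$ is word-representable.
\item So your claimed degree range of $4$ to about $8$ is wrong; degree $9$ also occurs at a domino corner.
\item Worse, already in the rectangle $[-1,3]\times[-2,2]$ the remaining diagonals can be chosen so that the only other odd-degree interior vertices are $(0,0)$, $(2,0)$ and $(1,-1)$.
\item The rim of each of these three has exactly one chord. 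It cuts off the triangle through $(1,0)$ and leaves an even cycle. All other rims are induced even cycles.
\end{itemize}
This non-$3$-colorable triangulation therefore contains no induced odd wheel at all. Every closed neighbourhood is a cone over a comparability graph, hence word-representable. Neither branch of your finite case check can succeed here.

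What is missing is a second type of obstruction, with its own argument. Take the $K_4$ on $K=\{(0,0),(1,0),(2,0),(1,-1)\}$ together with the apices $(0,-1)$, $(2,-1)$, $(1,1)$ of the faces across the three edges of its outer triangle. Each apex is adjacent to exactly two vertices of $K$. These seven vertices induce a graph that is not semi-transitive:
\begin{itemize}
\item An acyclic orientation orders $K$ linearly.
\item An apex adjacent to exactly $u,v\in K$ forces $u,v$ to be consecutive in this order, or to be its first and last elements. Otherwise a directed path through the apex and a vertex of $K$ lying between, before or after $u$ and $v$ has a shortcut.
\item The admissible pairs form a $4$-cycle on the four positions, and a $4$-cycle contains no triangle. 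So the three pairs of the outer triangle cannot all be admissible.
\end{itemize}

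A workable repair follows from this. In a rectangular polyomino with one domino, the only non-facial triangles are of this form: a chord joining the two corners of a long side, plus the grid vertex beyond the midpoint. Each encloses exactly the midpoint, and all three of its apices exist because its edges are interior. So either such a $K_4$ occurs, and the apex obstruction applies. Or there are no separating triangles, every rim is induced, and an odd-degree interior vertex gives an induced $W_5$, $W_7$ or $W_9$.

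For the odd wheels, your source/sink sketch is not yet a proof. The clean argument is that in a semi-transitive orientation every neighbourhood induces a comparability graph, while $C_{2k+1}$ with $k\ge 2$ is not one.
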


\begin{theorem}\cite{glen2016colourability}\label{4}
A \(K_4\)-free triangulation of a polyomino with \(n\)-omino tiles and without internal holes is word-representable if and only if it is \(3\)-colorable.
\end{theorem}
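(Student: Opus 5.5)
One direction is immediate. If the \(K_4\)-free triangulation \(T\) of a hole-free polyomino with \(n\)-omino tiles is \(3\)-colorable, then Theorem~\ref{1} makes it word-representable. All the work is in the converse: assuming \(T\) is not \(3\)-colorable, I will show that \(T\) has no semi-transitive orientation, and then conclude by Theorem~\ref{ch}.

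\textbf{Step 1: reduce to an odd wheel.} Because the polyomino has no internal holes, \(T\) is a near-triangulation. Its inner faces are all triangles, and the union of the inner faces is simply connected.

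For such a graph, \(3\)-colorability is forced locally. Color one triangle and propagate across shared edges; the propagation is unique because every inner face is a triangle. By simple connectivity, an inconsistency can only come from going around an interior vertex. Hence \(T\) is \(3\)-colorable if and only if every interior vertex has even degree.

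So if \(T\) is not \(3\)-colorable, some interior vertex \(v\) has odd degree. Its link, the cycle of neighbours around it, gives a wheel \(W_{2k+1}\) centred at \(v\) as a subgraph. Since \(T\) is \(K_4\)-free, the rim cannot carry chords that create triangles with \(v\). I then use the grid structure of polyomino triangulations, where vertices are grid points and tiles have bounded geometry. From this I argue that the neighbourhood of \(v\) contains an induced odd wheel \(W_5\) or \(W_7\), or else some small configuration already known not to be semi-transitive.

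\textbf{Step 2: odd wheels are not semi-transitive.} It is known that odd wheels \(W_{2k+1}\) with \(k\ge2\) are not word-representable. The argument is short. In any acyclic orientation of the odd rim, some rim vertex is a source or sink relative to its rim neighbours, and the parity forces a directed path along the rim together with the hub \(v\). This path creates a shortcut, violating semi-transitivity.

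Since induced subgraphs of word-representable graphs are word-representable, \(T\) is then not word-representable.

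\textbf{Main obstacle.} The hard part is the second half of Step 1: guaranteeing that the odd-degree vertex actually yields an \emph{induced} odd wheel of size at least \(5\). Two situations need care. The first is \(W_3=K_4\), which is excluded by hypothesis. The second is rim chords, which can arise from long \(n\)-omino tiles whose triangulations contain long diagonals.

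My first attempt will be a minimal-counterexample argument. If the link of \(v\) has a chord, the chord together with \(v\) bounds a smaller region. Because of the \(K_4\)-freeness, that region is itself a hole-free near-triangulation containing an interior odd-degree vertex. Induction on the number of faces would then finish the proof.
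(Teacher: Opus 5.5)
The paper does not prove this statement; it is quoted from~\cite{glen2016colourability}, so I judge your argument on its own. Your easy direction is fine, and so is the reduction to an interior vertex $v$ of odd degree $2k+1$. The step you flag as the main obstacle, however, is genuinely missing, and neither of your two routes closes it.

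\textbf{Where your routes fail.} Your sentence ``since $T$ is $K_4$-free, the rim cannot carry chords that create triangles with $v$'' is false. Every chord $v_iv_j$ of the link forms a triangle with $v$. $K_4$-freeness only says that $N(v)$ is triangle-free, so chords at cyclic distance at least $3$ are allowed; each gives a separating triangle $vv_iv_j$.

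The grid route gives no reason to land on $W_5$ or $W_7$. An $n$-omino tile has arbitrarily many boundary grid points, and fan-triangulating a long tile gives interior vertices of arbitrarily large degree. The paper's own list contains all of $W_{2k+1}$.

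In the induction, your claim is wrong that, by $K_4$-freeness, the region cut off by the chord and $v$ contains an interior odd-degree vertex. Take $W_5$ with centre $v$ and rim $v_1v_4v_5v_6v_7$. Inside the face $vv_1v_4$ add vertices $v_2,v_3$ with edges $v_1v_2,v_2v_3,v_3v_4,vv_2,vv_3$, and a vertex $w$ adjacent to $v_1,v_2,v_3,v_4$. This is a $K_4$-free near-triangulation in which $v$ has degree $7$ and link chord $v_1v_4$. Yet the interior vertices $v_2,v_3,w$ of the disc bounded by $vv_1v_4$ all have degree $4$.

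\textbf{The missing idea is parity, not $K_4$-freeness.} Let $a$ and $b$ be the numbers of link vertices strictly inside and strictly outside the triangle $vv_iv_j$; then $a+b=2k-1$ is odd.
If $a$ is odd, the inner disc is not $3$-colourable. A $3$-colouring would alternate two colours along $v_i,v_{i+1},\dots,v_j$ with distinct end colours, forcing $a$ to be even. So by~\cite{diks2002new} the disc has an interior vertex of odd degree.
If $a$ is even, then in $T$ minus the interior of the triangle, $v$ has odd degree $b+2$.

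Both pieces are $K_4$-free near-triangulations, induced in $T$, with fewer faces. So the recursion stops at a chordless odd link, i.e.\ an induced $W_{2m+1}$, and $K_4$-freeness gives $m\ge 2$. With this repair your route works.

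\textbf{A shorter route.} Even the repaired recursion is more than you need. $N(v)$ is triangle-free and contains the odd link cycle. A triangle-free comparability graph is bipartite: in a transitive orientation a directed path $x\to y\to z$ would force the edge $xz$, so every vertex is a source or a sink. Hence $N(v)$ is not a comparability graph. But every vertex neighbourhood of a word-representable graph is one~\cite{kitaev2008representable}; this is the ``locally comparability'' condition the paper uses. This settles the converse at once. It also replaces your Step~2 sketch, which never exhibits an actual violated shortcut.
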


\begin{theorem}\cite{glen2016colourability}
A \(K_4\)-free near-triangulation is word-representable if and only if it is \(3\)-colorable.
\end{theorem}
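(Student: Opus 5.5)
One direction is immediate: by Theorem~\ref{1}, every $3$-colorable graph is word-representable, so every $3$-colorable $K_4$-free near-triangulation is word-representable. The work lies in the converse. Let $G$ be a $K_4$-free near-triangulation that is word-representable. By Theorem~\ref{ch}, $G$ has a semi-transitive orientation $O$, and from $O$ I will build a proper $3$-coloring. I may assume $G$ is connected, and in fact $2$-connected: blocks of a near-triangulation are again near-triangulations, and $3$-colorings of blocks can be glued at cut vertices by permuting colors.

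\textbf{Inner triangles are transitive.} First I would show that every inner triangular face $xyz$ is oriented transitively, since an acyclic orientation of a triangle is always transitive. The key local fact concerns two inner faces $xyz$ and $xyw$ sharing the edge $xy$. Because $G$ is $K_4$-free, $z$ and $w$ are non-adjacent. Say $x\to y$ in $O$. I would check the possible orientations of the four remaining edges against the semi-transitivity condition. A configuration such as $z\to x\to y\to w$ together with the edge $z w$ is excluded because there is no such edge. The actual constraint arises from longer paths whose endpoints are adjacent.

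\textbf{Defining the coloring.} The cleaner approach is to define colors directly from $O$. In each inner face, the transitive orientation gives a source, a middle vertex and a sink. I would try to prove that the role of a vertex (source, middle or sink) is the same in every inner face containing it. If so, coloring each vertex by its role is a proper $3$-coloring, because each edge lies in some inner face and its endpoints play different roles there. Since $G$ is $2$-connected, the inner faces around an inner vertex $v$ form a wheel, and those around a boundary vertex form a fan. So it suffices to prove consistency between consecutive faces $vab$ and $vbc$ sharing the edge $vb$, with $a$ and $c$ non-adjacent.

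\textbf{Main obstacle.} The hard step is exactly this consistency across a shared edge. It cannot be proved purely locally. For instance, the orientation $a\to v$, $a\to b$, $v\to b$, $v\to c$, $c\to b$ makes $v$ middle in $vab$ but source in $vbc$, and this configuration on four vertices is semi-transitive since $ac$ is not an edge. So I expect to need a global argument. My plan is to consider the wheel around an inner vertex $v$ with rim cycle $u_1\dots u_k$, which is chordless-free only in the sense that non-consecutive rim vertices may still be adjacent. Using $K_4$-freeness together with planarity, I would show that the faces around $v$ force the rim to be even. Then, if the role of $v$ changes, I would construct a directed path along the rim whose endpoints are adjacent but which does not induce a transitive tournament, violating semi-transitivity.

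\textbf{Fallback.} If the direct role-consistency argument fails, I would instead argue by a minimal counterexample. Take a minimal non-$3$-colorable $K_4$-free near-triangulation. Non-$3$-colorability of a near-triangulation is equivalent to the existence of an inner vertex of odd degree; more precisely, a $2$-connected near-triangulation is $3$-colorable iff all inner vertices have even degree. Such a vertex gives an odd wheel $W_{2k+1}$, and I would show that an odd wheel in this planar, $K_4$-free setting admits no semi-transitive orientation. The difficulty is that $W_5$ is word-representable in general, so the contradiction must again use the surrounding near-triangulation structure. Handling this via the degree-parity characterization is where I expect most of the effort.
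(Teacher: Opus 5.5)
The easy direction via Theorem~\ref{1} is fine. The converse, however, has two genuine problems.

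\textbf{The role-coloring route cannot work, even with a global argument.} Your own example is not semi-transitive. In it, $a\to v\to c\to b$ is a directed path and $a\to b$ is an arc, so $\{a,v,c,b\}$ would have to induce a transitive tournament, which fails because $ac\notin E$. A correct example does exist on the same diamond: $a\to v$, $a\to b$, $v\to b$, $v\to c$, $b\to c$. This orientation is semi-transitive, since the only directed path on four vertices is $a\to v\to b\to c$ and its ends are non-adjacent. Yet $v$ is the middle vertex of $vab$ and the source of $vbc$. The diamond is itself a $K_4$-free near-triangulation with no inner vertex, so no wheel argument can restore consistency. An arbitrary semi-transitive orientation simply does not encode a $3$-coloring. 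In addition, your step ``$K_4$-freeness together with planarity force the rim to be even'' is false as stated: $W_5$ is a planar $K_4$-free near-triangulation with an odd rim. Word-representability has to be the input at that point, and you never say how it enters.

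\textbf{Your fallback is the right route, but it rests on a false premise.} $W_5$ is \emph{not} word-representable; it is the smallest non-word-representable graph, since the neighborhood of its centre induces $C_5$, which is not a comparability graph. The missing ingredient is the standard lemma that in a word-representable graph every neighborhood $N(v)$ induces a comparability graph. To see it, take a semi-transitive orientation, keep the arcs inside $N^-(v)$ and inside $N^+(v)$, and reverse the arcs between them. The shortcut condition, applied to paths through $v$, makes the result transitive. With this lemma the converse takes a few lines:
\begin{enumerate}
\item By the even-degree criterion you quote, applied to the $2$-connected case you arranged, a non-$3$-colorable $G$ has an inner vertex $v$ of odd degree.
\item $G[N(v)]$ contains the odd rim as a spanning cycle, and it is triangle-free because $G$ is $K_4$-free.
\item A transitive orientation of a triangle-free graph has no directed path $x\to y\to z$, since that would force $x\to z$ and hence a triangle.
\item So every vertex of $G[N(v)]$ is a source or a sink, which makes $G[N(v)]$ bipartite and contradicts the odd rim.
\end{enumerate}
Chords of the rim are harmless here, so no further use of the surrounding near-triangulation is needed. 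Note that the paper gives no proof of this statement to compare against: it quotes it from the literature and uses it as a black box, together with the even-degree criterion, in the proof of Theorem~\ref{k4free}.
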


Beyond the above mentioned classes, several results have also been obtained for triangulations and subdivisions of grid-based graph classes.

\begin{theorem}\cite{akrobotu2015word}
A triangulation of a grid graph is word-representable if and only if it is $3$-colorable.

\end{theorem}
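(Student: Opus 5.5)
The direction ``$3$-colorable $\Rightarrow$ word-representable'' is immediate from Theorem~\ref{1}. All the work is in the converse: a word-representable triangulation $G$ of a grid graph must be $3$-colorable. I would prove this by contraposition, locating a small non-word-representable induced subgraph inside every non-$3$-colorable triangulation of a grid graph.

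\textbf{Step 1: local structure.} In a triangulation of a grid graph, each unit square cell receives exactly one of its two diagonals. The graph is therefore a near-triangulation whose interior vertices all have degree $6$, except where the diagonal pattern makes some degrees $4$ or $8$. A near-triangulation is $3$-colorable if and only if every interior vertex has even degree. So $G$ fails to be $3$-colorable exactly when some interior vertex $v$ has odd degree. In a grid triangulation the degree of an interior vertex $v$ is $4$ plus the number of the four cells at $v$ whose chosen diagonal is incident to $v$. Odd degree thus means that $1$ or $3$ of these four cells have a diagonal at $v$, giving degree $5$ or $7$.

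\textbf{Step 2: obstruction.} Take an interior vertex $v$ of odd degree. Its neighbourhood in $G$ is a cycle, so the wheel $W_5$ or $W_7$ appears as a subgraph. I would then verify that this wheel is induced. Its rim consists of grid neighbours and diagonal neighbours of $v$ inside a $3\times 3$ block. Two rim vertices are adjacent only if they are grid-adjacent or lie on a common cell diagonal, and every such edge is either a rim edge or would have to cross a cell not containing both vertices. Hence no chords exist, and the rim is an induced odd cycle of length $5$ or $7$.

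The hub of an odd wheel $W_{2k+1}$, $k\ge 2$, is adjacent to every rim vertex. A standard argument shows $W_{2k+1}$ admits no semi-transitive orientation: an acyclic orientation of the odd rim must contain a directed path of length at least $2$ whose endpoints, together with the hub, force a shortcut. Concretely, the rim has a source and a sink, and a longest directed segment together with the hub yields a path $u_1\to\cdots\to u_t$ with edge $u_1u_t$ but a missing rim chord. By Theorem~\ref{ch}, $W_5$ and $W_7$ are not word-representable. Word-representability is hereditary for induced subgraphs, so $G$ is not word-representable.

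\textbf{Main obstacle.} The delicate point is that odd wheels are non-word-representable. I would first try the direct case analysis on the position of the hub in the acyclic order: the hub is neither a source nor a sink of the whole wheel, or else it is. If it is a source, the rim must be transitively orientable, which is impossible for an induced odd cycle of length at least $5$. If the hub is an interior element, pick rim neighbours $a\to h\to b$ and walk along the rim to produce the shortcut. A secondary check is that boundary vertices need not have even degree. The argument only uses interior vertices, so this causes no problem.
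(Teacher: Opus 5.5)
The paper gives no proof of this statement; it quotes it from \cite{akrobotu2015word}. Your route uses the same ingredients as the paper's proof of Theorem~\ref{k4free}: the parity criterion for $3$-colourability, plus an induced odd wheel at an odd-degree interior vertex. Your check that the wheel is chordless is correct, because every edge inside $N[v]$ is a cell side or a cell diagonal. The overall plan is right, but two points need repair.

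\textbf{The odd-wheel step is not proved.} You claim a directed rim segment together with the hub forces a shortcut. This fails in the one case that matters. Suppose $a\to b\to c$ are consecutive on the rim and $a\to h\to c$. Then whichever way $bh$ is oriented, the diamond on $\{a,b,c,h\}$ is semi-transitively oriented, so no violation arises there. No local argument can work, since even wheels are $3$-colourable and hence word-representable. The odd length of the rim has to be used globally.

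You can fix this in either of two ways. One is to cite Kitaev and Pyatkin~\cite{kitaev2008representable}: in a word-representable graph every neighbourhood induces a comparability graph, and $C_{2k+1}$ with $k\ge 2$ is not one. This is the fact behind Remark~2. The other is to argue directly, as follows.
\begin{enumerate}
\item Let $N^-$ and $N^+$ be the in- and out-neighbours of the hub $h$. Acyclicity makes every rim edge between $N^-$ and $N^+$ point into $N^+$.
\item The paths $h\to a\to b\to c$ and $a\to b\to c\to h$ rule out directed rim $2$-paths inside $N^+$ and inside $N^-$.
\item Take a rim edge $x\to y$ with $x\in N^-$ and $y\in N^+$. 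If $y'\in N^+$ were a rim in-neighbour of $y$, the path $x\to h\to y'\to y$ together with the edge $x\to y$ would force the edge $xy'$. If $x'\in N^-$ were a rim out-neighbour of $x$, the path $x\to x'\to h\to y$ would force the edge $x'y$. Every edge forced in steps 2 and 3 joins two rim vertices at distance $2$, which are non-adjacent.
\item If $N^-$ or $N^+$ is empty, step 2 makes the odd rim alternate, which is impossible.
\item Otherwise, each maximal rim arc inside $N^+$ alternates and has sources at both ends. Likewise each maximal arc inside $N^-$ alternates and has sinks at both ends. So every arc has an odd number of vertices.
\item There is an even number of arcs, so the rim has even length, a contradiction.
\end{enumerate}

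\textbf{The no-holes hypothesis must be made explicit.} Your sentence ``The graph is therefore a near-triangulation'' holds only if every bounded face of the grid graph is a unit cell. If there is a hole, $3$-colourability also depends on whether the colouring, which is forced triangle by triangle, closes up around the hole. For example, delete the centre of the $5\times 5$ grid of vertices, leaving a ring of twelve cells. No vertex is interior, yet suitable diagonals make the forced colouring fail to close. So Step~1's implication ``non-$3$-colourable $\Rightarrow$ odd interior vertex'' breaks down. Under the hole-free hypothesis, which covers the convex polyominoes of \cite{akrobotu2015word}, your argument is complete once the odd-wheel step is fixed.

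A minor point: interior degrees range over $4,\dots,8$, not just $4,6,8$, as you yourself note later.
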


\begin{theorem}\cite{chen2016word}
A face subdivision of a triangular grid graph \(G\) is word-representable if and only if it contains no induced subgraph isomorphic to \(A\) (see Figure \ref{fig1}); equivalently, \(G\) has no subdivided interior cell.
\end{theorem}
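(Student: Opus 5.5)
The statement concerns face subdivisions of a triangular grid graph $G$. Recall what a face subdivision is: inside a triangular face we place a new vertex and join it to the three corners, and we may repeat this inside the resulting faces. The plan has two directions: show that $A$ is not word-representable, and show that every subdivision avoiding $A$ is word-representable. By Theorem~\ref{ch}, both reduce to statements about semi-transitive orientations. I will first establish the equivalence of the two formulations, namely that containing an induced copy of $A$ is the same as having a subdivided interior cell. The graph $A$ (Figure~\ref{fig1}) is an interior cell together with its surrounding cells, where that cell has been subdivided. The forward direction is local: a subdivided interior cell, together with the three neighbouring triangles sharing its edges and the centre vertex, induces exactly $A$. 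The reverse direction uses planarity and the rigid structure of the triangular grid. An induced $A$ has an inner triangle whose three "outer" vertices exist, and the wheel structure in $A$ forces that inner triangle to be a grid cell that is not on the boundary.

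\emph{Necessity.} If $G$ contains an induced $A$, it suffices to show that $A$ admits no semi-transitive orientation, since word-representability is hereditary for induced subgraphs. I would argue by contradiction on an acyclic orientation of $A$.
\begin{itemize}
\item Every triangle of an acyclic orientation is transitive, so it has a unique source and sink.
\item Around the degree-6 vertex, and the degree-3 centre of the subdivided cell, I would propagate the constraints that any shortcut $u_1\to u_t$ of a directed path must span a transitive tournament.
\item Up to symmetry, fix the orientation of the central triangle and case-split on where the centre vertex sits in the order (source, sink, or middle).
\item In each case, the edges to the three outer vertices either create a directed cycle or a path $u_1\to u_2\to u_3\to u_4$ with shortcut $u_1\to u_4$ but a missing chord $u_1u_3$ or $u_2u_4$. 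Such a path is a forbidden non-transitive shortcut.
\end{itemize}
This is a finite case check, and I would organise it by the position of the source of the central triangle.

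\emph{Sufficiency.} Suppose no interior cell is subdivided, so all subdivisions lie in boundary cells. Start from the triangular grid itself, which is 3-colorable via the standard colouring $(i+2j)\bmod 3$ of lattice points. In a boundary cell, a subdivision can be repeated, giving a nested stacked triangulation inside that cell. I would orient the grid part by the 3-colouring (colour $1\to 2\to 3$ and $1\to 3$); this orientation is semi-transitive by Theorem~\ref{1}. I then extend it into each subdivided boundary cell.

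The key observation is that a boundary cell shares at most two of its edges with other cells. So the stacked triangulation inside it interacts with the rest of the graph only through its corners. We can place the new vertices so that any directed path leaving and re-entering the cell passes through a corner. Concretely, I would orient each inserted vertex so that within its triangle it sits strictly between the source and sink. This is repeated recursively, producing a transitive order on each nested $K_4$. A shortcut must then lie either entirely inside the cell, where it is handled by induction on the subdivision depth, or use the boundary-side corner, which has no outside neighbours beyond the grid.

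\emph{Main obstacle.} The hardest step is this extension when a boundary cell has two edges on the outer face but its third edge is adjacent to an interior cell. Long directed paths through the grid could then return to the cell and create a non-transitive shortcut. My first attempt is to choose, for each such cell, which 3-colouring-induced orientation to keep. Failing that, I would reverse the orientation of the whole grid to place the shared edge's endpoints as source and sink of the cell. I would verify semi-transitivity by checking the shortcuts of length at most 4 that cross the shared edge.
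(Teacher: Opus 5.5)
\textbf{There is a genuine gap.} The paper gives no proof of this statement; it is quoted from Chen, Kitaev and Sun. So I judge your proposal on its own.

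\textbf{Repeated subdivision makes the statement false.} In the cited result, a face subdivision replaces some cells by a plane copy of $K_4$, so each chosen cell gets exactly one new vertex. You allow the subdivision to be iterated inside a cell. Under that reading, both the ``equivalently'' clause and your sufficiency direction fail.

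Take a single cell $abc$, so there is no interior cell at all. Insert $x$ into $abc$, then $y$ into $abx$, $z$ into $bcx$, $v$ into $bxy$, $s$ into $axy$ and $t$ into $aby$. Then $\{x,y,b,v\}$ is a $K_4$, and $s,t,z$ are pairwise non-adjacent with
\[
N(s)\cap\{x,y,b,v\}=\{x,y\},\quad N(t)\cap\{x,y,b,v\}=\{y,b\},\quad N(z)\cap\{x,y,b,v\}=\{b,x\}.
\]
Hence $\{x,y,b,v,s,t,z\}$ induces $A$, and this graph is not word-representable.

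So no ``induction on the subdivision depth'' can succeed. Likewise, the implication ``induced $A$ $\Rightarrow$ subdivided interior cell'' needs exactly the one-level hypothesis. The grid is $K_4$-free, so the $K_4$ of $A$ is a cell plus its unique inserted vertex. That vertex is adjacent only to the three corners, which forces the three apexes to lie across the three edges of that cell.

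For necessity, your case split can be replaced by a short argument. Let $v_1<v_2<v_3<v_4$ be the transitive order of the $K_4$. An apex on $v_iv_j$ forces $j=i+1$ if it lies outside $[v_i,v_j]$, and $(i,j)=(1,4)$ if it lies between them. These admissible pairs form the $4$-cycle $v_1v_2v_3v_4$, which contains no triangle. Also, $A$ has no vertex of degree $6$.

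\textbf{Your orientation rule fails even with one-level subdivisions.} In the $3$-colouring orientation, a cell with colours $a=1$, $b=2$, $c=3$ has $a\to b\to c$ and $a\to c$. The apex $p$ across $ab$ has colour $3$, and the apex $q$ across $bc$ has colour $1$. The apex $r$ across $ac$ has colour $2$, so $a\to r\to c$.

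Putting $x$ strictly between source and sink gives one of two paths:
\begin{itemize}
\item $a\to x\to b\to p$ with shortcut $a\to p$;
\item $q\to b\to x\to c$ with shortcut $q\to c$.
\end{itemize}
Neither is transitive, since $x$ is adjacent to neither $p$ nor $q$. So whenever $ac$ is the only boundary edge of the cell, your placement is illegal. In that case $x$ must be the source or sink of the $K_4$; the only local danger, a path through an apex across $ac$, is then absent.

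The correct local rule depends on which edge of the cell lies on the boundary:
\begin{itemize}
\item if it is the $1$--$3$ edge, make $x$ the source or sink;
\item if it is a $1$--$2$ or $2$--$3$ edge, place $x$ between that edge's endpoints.
\end{itemize}
After that, you must still rule out shortcuts along paths that pass through inserted vertices of several cells.

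Your stated ``main obstacle'' is misdiagnosed: a cell with two boundary edges is the easy case. Your proposed repairs also do not help. Reversing all arcs keeps the source--sink edge of every cell unchanged. The choice of colour permutation is global, not per cell.
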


Related characterizations have also been established for triangulations of grid-covered cylinder graphs (GCCGs).

\begin{theorem}\cite{chen2016word1}
A triangulation of a GCCG with more than three sectors is word-representable if and only if it contains no induced subgraph isomorphic to \(W_5\) or \(W_7\).
\end{theorem}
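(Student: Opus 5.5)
The plan is to prove the two directions separately, using Theorem~\ref{ch} throughout so that word-representability can be replaced by the existence of a semi-transitive orientation.

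\emph{Necessity.} Word-representability is hereditary: deleting a letter from a representing word represents the induced subgraph. It therefore suffices to show that $W_5$ and $W_7$ admit no semi-transitive orientation. For an odd wheel $W_{2k+1}$ with $k\ge 2$, fix an acyclic orientation and look at the rim cycle $C_{2k+1}$. Because the rim has odd length, some rim vertex must lie on a directed path of length two along the rim. Combining this with the orientations of the spokes at the hub, I would exhibit a directed path whose endpoints are adjacent (a shortcut) but whose vertices do not induce a transitive tournament. This is a finite case analysis on where the hub sits in the acyclic order relative to the rim vertices.

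\emph{Sufficiency.} Let $G$ be a triangulation of a GCCG with more than three sectors and no induced $W_5$ or $W_7$.
\begin{enumerate}
\item In a triangulated grid, an interior vertex has degree between $4$ and $8$. Its neighbourhood is a cycle, and the assumption of more than three sectors should guarantee that this cycle has no chord: a chord could only arise by wrapping around the cylinder, which needs at most three sectors. Consequently, an interior vertex of degree $5$ or $7$ would be the hub of an induced $W_5$ or $W_7$. So every interior vertex has even degree.
\item Cut the cylinder along one sector boundary to obtain a triangulated polyomino-like near-triangulation $G'$. This graph is simply connected and all its interior vertices have even degree. The standard parity argument for near-triangulations (as used in the $K_4$-free and polyomino results) then shows that $G'$ is $3$-colorable. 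The colouring is forced once a single triangle is fixed, and it is propagated face by face.
\item Glue back along the seam. If the two copies of each seam vertex receive the same colour, $G$ is $3$-colorable and Theorem~\ref{1} finishes the proof. Otherwise the colouring of the seam is shifted by a cyclic permutation of the three colours.
\end{enumerate}

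The main obstacle is the seam mismatch in the last step. In that case $G$ need not be $3$-colorable, so Theorem~\ref{1} cannot be applied, and I would instead construct a semi-transitive orientation directly. The first thing I would try is the following. Orient edges of $G'$ from colour $1$ to $2$ to $3$, and from $1$ to $3$. Then reorient the edges crossing the seam according to the permuted colours, using the sector structure: with at least four sectors, a directed path that crosses the seam cannot return to produce a non-transitive shortcut. The task is then to check acyclicity and the absence of shortcuts, which reduces to local configurations around the seam. The hypothesis of more than three sectors should enter here, ruling out short wrap-around cycles.
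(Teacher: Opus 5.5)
The paper gives no proof of this statement. It only quotes it from \cite{chen2016word1}, so your proposal has to stand on its own.

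\textbf{What is correct.} The necessity direction is fine. You could also replace the case analysis by the standard fact, used in the paper's Remark~2, that neighbourhoods in word-representable graphs induce comparability graphs, which $C_5$ and $C_7$ are not. Steps~1 and~2 are correct:
\begin{itemize}
\item For $s\ge 4$ sectors, the link of an interior vertex is an induced cycle of length $\deg v\in\{4,\dots,8\}$. So excluding $W_5$ and $W_7$ is exactly requiring every interior degree to be even.
\item The cut-open graph $G'$ is then $3$-colourable, uniquely up to renaming colours.
\end{itemize}
Your claim that the seam mismatch is a single cyclic permutation $\pi$ is true, but it needs an argument. Uniqueness of the colouring of the unrolled strip gives one permutation $\pi$ for the whole seam. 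A dual path of length $2s$ once around the cylinder shows that $\pi$ preserves the cyclic colour order of triangles, so $\pi$ cannot be a transposition.

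\textbf{The gap.} The unproved case is $\pi\neq\mathrm{id}$, and this is exactly where the theorem goes beyond the grid case; you only sketch it as ``the first thing I would try''. This case really occurs:
\begin{itemize}
\item With all diagonals parallel, the strip colouring is $(r+c)\bmod 3$ in suitable coordinates. So for $s\not\equiv 0\pmod 3$ the graph is not $3$-colourable, although every interior vertex has degree $6$.
\item For $s=4$ and a single row of cells, the graph is the square antiprism $C_8^2$ (with $i\sim i\pm1,\,i\pm2 \bmod 8$).
\end{itemize}

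\textbf{The proposed orientation is not well defined.} The two copies of a seam edge $xy$ carry colours $(a,b)$ and $(\pi(a),\pi(b))$. For instance, $(2,3)$ versus $(3,1)$ gives $x\to y$ on one side and $y\to x$ on the other. ``Reorient according to the permuted colours'' does not say which to take.

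\textbf{The verification is not local.} A violation is any directed path of length at least $3$ whose end vertices are adjacent; in a $K_4$-free graph such as $C_8^2$, every such path is forbidden. Such paths can wind around the cylinder. So ``reduces to local configurations around the seam'' is an unproved claim, not a step.

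\textbf{What is missing.} You need an explicit global orientation of $G$, together with a proof that it is acyclic and has no shortcut. To illustrate the kind of argument required, take $C_8^2$ and orient each edge from the smaller to the larger value of $i\bmod 4$. This is semi-transitive: the only directed paths of length $3$ are $0\to1\to2\to3$ and $4\to5\to6\to7$, and their ends are non-adjacent. Note that this orientation comes from a $4$-colouring, not from patching a $3$-colouring along a seam.
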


\begin{theorem}\cite{chen2016word1}
A triangulation of a GCCG with three sectors is word-representable if and only if it contains no graph from Figure~4.11 in \cite{chen2016word1} as an induced subgraph.
\end{theorem}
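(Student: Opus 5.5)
This is a cited result from \cite{chen2016word1}, so the paper only quotes it. Here is how I would prove it directly, using Theorem~\ref{ch}.

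Let $G$ be a triangulation of a GCCG with three sectors. It is a near-triangulation drawn on a cylinder whose circumference is three cells. Each \emph{ring} (the cycle obtained by going once around the cylinder) is therefore a triangle. The interior cells are triangulated by their diagonal choices, and possibly by subdivisions that create interior vertices of higher degree.

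\emph{Necessity.} This direction is a finite check. For each graph $H$ in Figure~4.11 of \cite{chen2016word1}, I would show that $H$ has no semi-transitive orientation. The natural approach is a case analysis on the orientation of a central triangle. Fix the transitive orientation of one triangle, which is forced up to symmetry. Then propagate constraints: each adjacent triangle must be transitive, and any directed path $u_1\to\cdots\to u_t$ together with the edge $u_1\to u_t$ must induce a transitive tournament. Continue until a shortcut is forced. The wheels $W_5$ and $W_7$ are the standard examples of this argument. The three-sector graphs are handled the same way, and symmetry cuts down the number of cases. Because word-representability is hereditary for induced subgraphs, any $G$ containing some such $H$ is not word-representable.

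\emph{Sufficiency.} Suppose $G$ avoids every graph in the list. I would build a semi-transitive orientation directly, sector by sector along the axis of the cylinder. When none of the forbidden configurations occurs, the local structure between consecutive rings should be restricted to a few patterns, namely a few diagonal arrangements around the triangle-shaped ring. For each pattern I would orient the edges by a fixed rule: orient edges along the cylinder axis ``upward", and orient each ring triangle transitively with a rotation chosen to match the previous ring. I would then check that no shortcut appears. Any shortcut would have to consist of a long directed path together with a non-transitive chord. Such a path cannot wind around the cylinder, since each ring has only three vertices, so it must lie within two or three consecutive rings. This leaves a bounded check. Where the orientation fails, the offending configuration should be exactly one of the forbidden graphs.

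\emph{Main obstacle.} The hard part is the sufficiency step, specifically compatibility between consecutive rings. With only three sectors, every ring is a triangle, so the graph contains many $K_4$'s and is far from $3$-colourable. The argument therefore cannot pass through Theorem~\ref{1} and must use genuinely semi-transitive, non-transitive orientations. I would first try an inductive extension. Assume a semi-transitive orientation of the first $k$ rings in which the top ring is oriented transitively with a specified source and sink. Then show that the next ring can be added keeping this invariant, unless the added layer creates a forbidden subgraph. This is the step whose case analysis I would expect to be long and error-prone.
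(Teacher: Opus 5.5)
As you note, the paper only quotes this result from \cite{chen2016word1} and gives no proof, so your sketch has to stand on its own. As it stands it has a genuine gap, located exactly where the theorem has content.

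The necessity half is only a promise, since you never identify the graphs of Figure~4.11. It is, however, in principle a finite check.

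The sufficiency half rests on a scheme that provably cannot work. Your locality claim, that every shortcut lies inside one slab $R_j\cup R_{j+1}$, needs \emph{every} inter-ring edge (verticals and diagonals alike) to point from $R_j$ to $R_{j+1}$. It is this, not the rings having three vertices, that stops directed paths from descending. But then the scheme already fails on the octahedron, i.e.\ two rings $a_1a_2a_3$, $b_1b_2b_3$ joined by a slab whose three diagonals all turn the same way, say $a_ib_{i+1}$:
\begin{itemize}
\item Whatever transitive orientation you give the upper ring, its source $s$ and sink $t$ form a pair $\{b_k,b_{k+1}\}$.
\item The vertex $a_k$ is adjacent to exactly these two upper vertices, and not to the middle vertex $m=b_{k+2}$.
\item So $a_k\to s\to m\to t$ together with $a_k\to t$ is a shortcut.
\end{itemize}
Yet the octahedron, and any stack of such slabs, has all degrees equal to $4$ or $6$. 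It is therefore an Eulerian triangulation, hence $3$-colourable and word-representable by Theorem~\ref{1}, so it contains none of the forbidden graphs. Thus your claim that ``where the orientation fails, the offending configuration is a forbidden graph'' is false for your scheme. Incidentally, this also shows the class is not uniformly ``far from $3$-colourable'': $K_4$'s occur only in slabs with mixed diagonals.

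A correct sufficiency argument must therefore let the inter-ring edges of a slab point both ways. For the octahedron, some inter-ring edge must point downward, as in the orientation induced by a $3$-colouring. As soon as you allow this, directed paths are no longer monotone in the ring index. They can climb several rings and come back down, so shortcuts are no longer confined to one slab, and your reduction to a bounded check has to be re-proved from scratch.

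Your inductive invariant has a second problem. It fixes one orientation of the top ring (``a specified source and sink''). Each ring is constrained both by the slab below it and by the slab above it, so a greedy commitment can dead-end even when another choice would have extended. Failure to extend then yields no forbidden subgraph. To repair this you would need to do two things:
\begin{itemize}
\item carry the \emph{set} of admissible configurations of the top ring through the induction;
\item prove that this set becomes empty only when a bounded window of consecutive rings induces one of the listed graphs.
\end{itemize}
That is the actual content of the theorem, and none of it is supplied.
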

More recently, word-representability of chordal near-triangulations is characterized in terms of forbidden induced subgraphs in~\cite{hauser2025word}. This result is particularly significant in the present context, since chordal near-triangulations form a broad subclass of near-triangulations.
\begin{theorem}\cite{hauser2025word}\label{app}
    An Apollonian network is word-representable if and only if it is $\mathcal{F}_1$-free and locally comparability.
\end{theorem}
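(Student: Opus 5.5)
The plan is to follow the standard route through Theorem~\ref{ch}: an Apollonian network is word-representable exactly when it admits a semi-transitive orientation. So we show necessity and sufficiency of the two conditions, $\mathcal{F}_1$-freeness and local comparability (every vertex neighbourhood induces a comparability graph), directly in terms of orientations.

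\emph{Necessity.} Word-representability is hereditary for induced subgraphs. Each member of $\mathcal{F}_1$ can be checked to admit no semi-transitive orientation, by a finite case analysis on orientations of a few triangles, or by citing the known non-representable wheels such as $W_5$. For local comparability, fix a semi-transitive orientation and a vertex $v$. Restricted to $N(v)$, the orientation is transitive: suppose $a\to b\to c$ in $N(v)$ with $ac$ an edge. Adding $v$ at the appropriate end gives a directed path such as $a\to b\to c$ together with $v$, so semi-transitivity forces the whole set to be a tournament, and in particular $a\to c$. If instead $ac$ is not an edge, then $v$ together with $a,b,c$ produces a shortcut, a contradiction, once one distinguishes whether $v$ is a source or a sink relative to $a,b,c$. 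I expect this to need care, but it is the standard fact that neighbourhoods in word-representable graphs are comparability graphs.

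\emph{Sufficiency.} Here I would induct on the Apollonian construction. An Apollonian network is obtained from $K_3$ (or $K_4$) by repeatedly inserting a degree-3 vertex into a triangular face; equivalently, it has a vertex $x$ of degree 3 whose deletion leaves an Apollonian network $G'$. Since $G'$ is an induced subgraph, it is still $\mathcal{F}_1$-free and locally comparability, so by induction it has a semi-transitive orientation. The key step is to choose this orientation so that it extends to $x$. Because the neighbours $a,b,c$ of $x$ form a triangle, which is transitively oriented as, say, $a\to b\to c$, we place $x$ as a source, a sink, or between two of them. We must avoid creating a shortcut through $x$.

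To make the induction work, I would strengthen the hypothesis and carry along orientations with extra structure. One option is to build them from a global 4-colouring-like ordering restricted by local comparability. Another is to use the tree of separating triangles, in which $K_4$-blocks are glued along triangles, and show that semi-transitive orientations of the pieces can be glued along a shared triangle whenever the orientations agree on it. The local comparability condition at the vertices of the shared triangle should guarantee that a compatible transitive order of the triangle exists on both sides. $\mathcal{F}_1$-freeness rules out the configurations in which a triangle is forced to have incompatible orientations from its two sides.

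\emph{Main obstacle.} The hardest step is this gluing/extension step: showing that any shortcut created when combining orientations must live inside a bounded configuration, which is then one of the graphs in $\mathcal{F}_1$ or violates local comparability. I would first try to prove that a minimal shortcut in the glued orientation has length at most about $5$ and stays within two adjacent $K_4$-blocks. This would reduce the problem to a finite check against $\mathcal{F}_1$.
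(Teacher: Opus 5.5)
The paper does not prove this statement; it is quoted from \cite{hauser2025word}. Your proposal therefore has to stand on its own, and its sufficiency half does not. It names the difficulty but gives no argument, and the one concrete mechanism it offers is false. Semi-transitive orientations that agree on a shared triangle need not glue. Orient the $K_4$ on $\{u,x,y,z\}$ as the transitive tournament $u\to x\to y\to z$, and the $K_4$ on $\{w,x,y,z\}$ as $x\to w\to y\to z$. The two agree on $xyz$, but in their union (the Apollonian network $K_5$ minus $uw$) the path $u\to x\to w\to y$ together with the edge $u\to y$ is a shortcut.

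The correct local rule is as follows. Let $C$ be a clique, let $a,b$ be non-adjacent and both complete to $C$, and suppose $C\cup\{a\}$ and $C\cup\{b\}$ are transitively oriented. Then the orientation of $C\cup\{a,b\}$ is semi-transitive if and only if $a$ and $b$ lie in the same gap of the linear order of $C$, or one precedes and the other follows all of $C$. Consequently, a given semi-transitive orientation of $G'=G-x$ may fail to extend to $x$ even though $G$ is word-representable. Take the $K_4$ on $\{u,a,b,c\}$ with $y$ inserted in the face $uab$, ordered $a<u<b<c$ with $y$ last, and insert $x$ into $abc$. Then $u$ forces $x$ into the gap between $a$ and $b$, while $y$ forbids exactly that gap. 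Yet orienting $G$ along $y<u<x<a<b<c$ is semi-transitive. So the whole content of the theorem lies in two tasks. First, decide \emph{which} orientation of $G'$ to carry through the induction, for example via an invariant recording which gaps of each face remain admissible for a future vertex. Second, prove that when no admissible gap exists, you can exhibit either a vertex whose neighbourhood is not a comparability graph or an induced member of $\mathcal{F}_1$. That step is missing.

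Your fallback, bounding a minimal shortcut and reducing to a finite check, cannot supply it. A shortcut in one particular glued orientation certifies nothing about $G$; the $K_5$ minus an edge above is word-representable. Moreover, $\mathcal{F}_1$ is an infinite family of locally comparability obstructions whose fan $4,5,\dots,k+1$ between the poles $1$ and $2$ is arbitrarily long. Their non-representability comes from orientation constraints that propagate along the whole fan and clash only between the two ends (vertices $3$ and $k+2$), which no bounded configuration detects.

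The same point affects your necessity half. A finite case analysis does not cover $\mathcal{F}_1$; you need an induction along the fan. Wheels are irrelevant, because Apollonian networks are chordal and contain no induced $W_5$.

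Finally, your claim that a semi-transitive orientation is transitive on $N(v)$ is false. Take the diamond in which $v$ is adjacent to the path $a,b,c$, oriented $a\to b\to c$, $a\to v$, $b\to v$, $v\to c$. It is semi-transitive, yet $a\to b\to c$ with $a,c$ non-adjacent. You missed the case where $v$ is neither a source nor a sink on $\{a,b,c\}$. The fix is this: let $I$ and $J$ be the in- and out-neighbours of $v$. The orientation is transitive on $I$ and on $J$, and every $I$--$J$ edge points from $I$ to $J$. Reversing exactly the $I$--$J$ edges gives a transitive orientation of $N(v)$, since each edge needed for transitivity is forced by applying semi-transitivity to a path through $v$. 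Alternatively, cite the known fact that neighbourhoods in word-representable graphs are comparability graphs.
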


\begin{theorem}\cite{hauser2025word}
A chordal near-triangulation is word-representable if and only if it is locally comparability and $\{A, B, B', \mathcal{F}_{1}, \mathcal{F}_{2}\}$-free.
\end{theorem}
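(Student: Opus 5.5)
The statement is \emph{quoted from \cite{hauser2025word}}, so the paper does not prove it. Here is the plan I would follow to prove it myself.

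\textbf{Necessity.} Word-representability is hereditary under induced subgraphs, since deleting letters from a representing word represents the induced subgraph. So it suffices to check two things.
\begin{enumerate}[label=(\roman*)]
\item Each of $A,B,B',\mathcal{F}_1,\mathcal{F}_2$ has no semi-transitive orientation. This is then non-word-representable by Theorem~\ref{ch}.
\item Every word-representable graph is locally comparability.
\end{enumerate}
For (ii), take a semi-transitive orientation and a vertex $v$, and restrict it to $N(v)$. Any directed path $x\to y\to z$ in $N(v)$ extends through $v$: depending on how $v$ is oriented relative to $x$ and $z$, we get a path $v\to x\to y\to z$, or $x\to y\to z\to v$, or a path with $v$ in the middle. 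Its endpoints are adjacent, because $v\in N(x)\cap N(z)$. Semi-transitivity then forces the edge $x\to z$, so the restriction is transitive. Acyclicity of the orientation excludes the remaining configurations.

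For (i), each forbidden graph is small. I would fix the orientation of one triangle up to symmetry and propagate forced edges, using the standard facts that triangles are transitive and that a $4$-cycle with a chord cannot be a shortcut. A short case analysis, or a computer check, should show that every branch produces a shortcut or a cycle.

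\textbf{Sufficiency.} This is the main part. I would first establish the structure of a chordal near-triangulation $G$.
\begin{itemize}
\item If $G$ is $3$-connected, then $G$ is an Apollonian network. The reason is that separating triangles split it into smaller pieces, and chordality together with triangular inner faces forces repeated stacking into triangles.
\item Otherwise, $G$ has a $2$-cut. In a near-triangulation a $2$-cut is a chord of the outer cycle, and it is an edge. So $G$ is obtained by gluing Apollonian networks (and single triangles) along edges in a tree-like fashion, where each gluing edge becomes a chord of the outer face.
\end{itemize}

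By Theorem~\ref{app}, each Apollonian piece is word-representable, because it inherits the locally comparability and $\mathcal{F}_1$-free properties. The proof therefore reduces to gluing semi-transitive orientations along edges, which I would do by induction on the number of pieces. Gluing along a single vertex always preserves semi-transitivity. Gluing along an edge $uv$ preserves it only if the two orientations agree on $uv$ and no shortcut passes through both sides. Any path crossing from one side to the other must pass through $u$ or $v$.

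I would prove a strengthened inductive statement: for a $\{A,B,B',\mathcal{F}_2\}$-free piece, and for each outer edge $uv$ that may later be used for gluing, there is a semi-transitive orientation in which $uv$ has a prescribed direction. In addition, neither $u$ nor $v$ is the interior vertex of a long directed path whose endpoints lie on both sides. For $3$-colorable pieces I would take the orientation induced by a $3$-colouring, with colours ordered $1<2<3$. This has no directed paths of length more than $2$, so shortcuts cannot arise across the gluing edge, and the colours can be permuted so that $uv$ has the desired direction.

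\textbf{Main obstacle.} The difficult case is a piece that is not $3$-colourable, namely one containing $K_4$-type stacked structure or odd wheels. There the admissible orientations are rigid, and the direction of a boundary edge may be forced. I expect the configurations $A,B,B',\mathcal{F}_2$ to be exactly the minimal ways in which two rigid pieces, or a rigid piece together with a triangle chain, force incompatible directions on a shared chord or create a crossing shortcut. The hardest step will be proving this converse: whenever the prescribed-direction orientation fails to exist, one of these four graphs appears as an induced subgraph. I would attempt it by locating the minimal rigid sub-Apollonian network adjacent to the offending chord and showing that it, together with its neighbour across the chord, induces one of $A,B,B',\mathcal{F}_2$.
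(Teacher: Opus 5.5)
Your decomposition into Apollonian networks glued along chords of the outer cycle is sound and matches the paper's Observation; the paper itself only quotes this theorem and gives no proof to compare with. The genuine gap is sufficiency: it stops where the theorem's content begins, and your inductive invariant cannot carry the induction.

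Prescribing the direction of a gluing edge $uv$ is vacuous, since reversing all arcs preserves semi-transitivity. What matters is where the common neighbours of $u$ and $v$ sit. Orient $u\to v$. Suppose one side has a common neighbour $x$ with $x\to u$, $x\to v$ (or $u\to x$, $v\to x$), and the other side has one $y$ with $u\to y\to v$. Then $x\to u\to y\to v$ together with the arc $x\to v$ is a shortcut, since $x\not\sim y$. Conversely, every shortcut crossing the chord contains a detour $u\to\cdots\to v$ on the other side, so this is the only obstruction. Hence gluing succeeds iff all common neighbours of $u,v$ on both sides lie between $u$ and $v$, or all lie outside. Each piece must realise such a homogeneous pattern at all of its gluing chords simultaneously.

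Your $3$-colouring device does not address this, for three reasons.
\begin{itemize}
\item The shortcut above uses only the arcs $x\to u$, $x\to v$ of the coloured side, so having no long paths on that side does not prevent crossing shortcuts.
\item The only $3$-colourable Apollonian network is $K_3$.
\item Even a triangle glued along two or three edges is constrained: a transitive triangle has exactly one edge whose opposite vertex lies between its ends.
\end{itemize}
Inside a larger piece, the forced transitive $K_4$'s can make a mixed pattern unavoidable. Indeed $B$ is an Apollonian network glued to a single triangle along an edge, so its non-representability says exactly that this happens there. Showing that local comparability, $\mathcal{F}_1$-freeness and the absence of $A,B,B',\mathcal{F}_2$ make all these constraints satisfiable along the whole tree of pieces \emph{is} the theorem. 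You only state that you expect it.

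The necessity part also has two faulty steps.

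In (ii), when $x\to v\to z$, the extended path through $v$ has endpoints $x$ and $z$, whose adjacency is exactly what you are trying to prove. The claim that the restriction is transitive is in fact false. The diamond with arcs $x\to y\to z$, $x\to v\to z$, $y\to v$ and no edge $xz$ is semi-transitive, yet it induces $x\to y\to z$ on $N(v)$. The conclusion can be rescued as follows.
\begin{itemize}
\item Keep the arcs inside $N^-(v)$ and inside $N^+(v)$; these are transitive by your argument, since there $v$ is an endpoint.
\item Reverse all arcs between the two sets; by acyclicity they all go from $N^-(v)$ to $N^+(v)$.
\item A mixed path $a\to b\to c$ in the new orientation then comes from an original path $c\to v\to a\to b$ or $b\to c\to v\to a$ whose endpoints are joined by an arc, so semi-transitivity yields the edge $ac$.
\end{itemize}
Alternatively, cite the Kitaev--Pyatkin result that neighbourhoods in word-representable graphs are permutationally representable.

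In (i), $\mathcal{F}_1$ and $\mathcal{F}_2$ are infinite families indexed by $k$, so a finite case analysis or a computer check cannot settle them. You need an argument uniform in $k$, for example showing that the orientation forced at one end of the fan propagates along it until it closes into a shortcut or a cycle.
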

Together, these results suggest that word-representability of near-triangulation-type graph classes depends on strong structural properties, motivating the search for a unified characterization of general near-triangulations.

\section{Characterization of Word-Representable Near-Triangulations}\label{result}
In this work, we establish a complete characterization of word-representable near-triangulations. In the context of near-triangulations \cite{salam2022chordal}, Salam et al. introduced a decomposition of near-triangulations into W-components (i.e, 2-connected components free of both edge separators and triangle separators). However, from the viewpoint of word-representability,  this decomposition has a limitation: the class of word-representable graphs is not closed under gluing two word-representable graphs along a clique (clique separators in case of decomposition into $W$-components). Consequently, a structural characterization of word-representable near-triangulations is equivalent to identifying the induced subgraphs whose exclusion guarantees the closure property of word-representable W-components of a near-triangulation under clique glueing.

As part of this work, we first clarify two inaccuracies in earlier papers~\cite{glen2016colourability,kitaev2024human}. In particular, we rectify a result from~\cite{glen2016colourability} that \emph{a $K_4$-free near-triangulation is word-representable if and only if it is perfect}. We give an explicit counterexample (\emph{Figure \ref{fig2}}) which is $K_4$-free and non-perfect but still $3$-colorable as well as word-representable. Also, we provide the correct characterization for $K_4$-free word-representable near-triangulations as follows.
\begin{figure}[H]
\centering

\begin{minipage}{0.42\textwidth}
\centering
\begin{tikzpicture}[scale=1.7]

\node[circle,fill=black,inner sep=1.5pt,label=below:$1$] (A) at (0,0) {};
\node[circle,fill=black,inner sep=1.5pt,label=below:$2$] (B) at (2,0) {};
\node[circle,fill=black,inner sep=1.5pt,label=above:$3$] (C) at (1,1.732) {};

\node[circle,fill=black,inner sep=1.5pt,label=right:$4$] (D) at (1,0.58) {};

\draw (A)--(B)--(C)--cycle;

\draw (A)--(D);
\draw (B)--(D);
\draw (C)--(D);
\draw (A)--(C);

\end{tikzpicture}

\caption{$K_4$ as a subdivision of a triangular face}
\label{k4}
\end{minipage}
\hfill
\begin{minipage}{0.52\textwidth}
\centering

\begin{tikzpicture}[scale=1.2, every node/.style={inner sep=2pt},
font=\fontsize{1}{7}\tiny]

\node[circle, fill=red] (1) at (.5,1) {};
\node[circle, fill=blue] (2) at (1.5,1) {};
\node[circle, fill=green] (3) at (2.5,1) {};
\node[circle, fill=blue] (4) at (0,0) {};

\node[circle, fill=green] (5) at (1,0) {};
\node[circle, fill=red] (6) at (2,0) {};
\node[circle, fill=blue] (7) at (3,0) {};
\node[circle, fill=red] (8) at (.5,-1) {};
\node[circle, fill=blue] (9) at (1.5,-1) {};
\node[circle, fill=green] (10) at (2.5,-1) {};
\node[circle, fill=green] (11) at (1,-2) {};
\node[circle, fill=red] (12) at (2,-2) {};

\node[above=2pt] at (1) {$1$};
\node[above=2pt] at (2) {$2$};
\node[above=2pt] at (3) {$3$};
\node[above=3pt] at (4) {$4$};
\node[above=3pt] at (5) {$5$};
\node[above=3pt] at (6) {$6$};
\node[above=2pt] at (7) {$7$};
\node[left=2pt] at (8) {$8$};
\node[below=3pt] at (9) {$9$};
\node[right=2pt] at (10) {$10$};
\node[below=1pt] at (11) {$11$};
\node[below=2pt] at (12) {$12$};

\draw (1) -- (2) -- (3) -- (7) -- (10) -- (12) -- (11) -- (8) -- (4) -- (1);
\draw (1) -- (5) -- (9) -- (12);
\draw (3) -- (6) -- (9) -- (11);
\draw (4) -- (5) -- (6) -- (7);
\draw (2) -- (5) -- (8);
\draw (2) -- (6) -- (10);
\draw (8) -- (9) -- (10);

\end{tikzpicture}

\caption{A \(K_4\)-free \(3\)-colorable non-perfect near-triangulation}
\label{fig2}
\end{minipage}

\end{figure}

\begin{theorem}\label{k4free}
A $K_4$-free near-triangulation is word-representable if and only if it is odd-wheel-free i.e., contains no induced odd wheel $W_{2k+1}$ (see Figure~\ref{fig1}).
\end{theorem}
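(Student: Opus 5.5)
Since the theorem of \cite{glen2016colourability} already states that a \(K_4\)-free near-triangulation is word-representable if and only if it is \(3\)-colorable, it suffices to prove that a \(K_4\)-free near-triangulation \(G\) is \(3\)-colorable if and only if it has no induced odd wheel \(W_{2k+1}\).

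\emph{Necessity.} In an odd wheel the rim is an odd cycle, so it needs three colours, and the hub is adjacent to every rim vertex and needs a fourth. Hence \(W_{2k+1}\) is not \(3\)-colorable. \(3\)-colorability is inherited by induced subgraphs, so a \(3\)-colorable \(G\) contains no induced odd wheel. (Alternatively, odd wheels are known to be non-word-representable, and word-representability is hereditary.) This direction is routine.

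\emph{Sufficiency.} Assume \(G\) is \(K_4\)-free and odd-wheel-free. I would first reduce to \(2\)-connected \(G\): blocks glued at cut vertices can be recoloured by permuting colours, and every block of a near-triangulation is again a near-triangulation. For \(2\)-connected \(G\), the outer face is a cycle and every inner face is a triangle. Take the weak dual \(T\), whose vertices are the inner triangles and whose edges join triangles sharing an edge. Then \(T\) is connected, and proper \(3\)-colourings of \(G\) correspond to propagating a colouring along \(T\): each step forces the third vertex of the adjacent triangle. This propagation is consistent exactly when, around every interior vertex \(v\), the cyclic sequence of neighbours has even length. Going around \(v\) alternates the two remaining colours, and since \(T\) restricted to the inner faces is a planar structure whose cycles are generated by the face-cycles around interior vertices, these local parity conditions are also globally sufficient. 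This is the classical fact that a near-triangulation is \(3\)-colorable iff every interior vertex has even degree.

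\emph{Main step.} So it remains to show that an interior vertex \(v\) of odd degree \(d\) yields an induced odd wheel or a \(K_4\). The link of \(v\) is a cycle \(C=u_1u_2\cdots u_d\), and together with \(v\) it forms a wheel \(W_d\). If \(C\) is induced, we have the odd wheel \(W_d\); note \(d=3\) would give \(K_4\), which is excluded. Otherwise \(C\) has a chord \(u_iu_j\). This chord lies outside the disc formed by the faces at \(v\), so the triangle \(v u_i u_j\) is a separating triangle. The chord splits \(C\) into two paths, and one of the resulting cycles \(v u_i u_{i+1}\cdots u_j\) or \(v u_j \cdots u_i\) is odd, because their lengths sum to \(d+2\) up to sign. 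I would pick a chord minimizing the arc length and recurse: the shorter cycle \(u_i u_{i+1}\cdots u_j\) of odd length at least \(3\), together with the hub \(v\), gives a smaller wheel. Repeating with a chord-minimal choice ends with an induced cycle of odd length with \(v\) adjacent to all its vertices. If that length is \(3\), we get \(K_4\), which is excluded; otherwise we get an induced odd wheel. The delicate part is the parity bookkeeping: the arc \(u_i\cdots u_j\) plus the chord forms a cycle of length \(j-i+1\), and the two complementary arcs give lengths summing to \(d+2\), which is odd, so exactly one is odd. Choosing the odd one and iterating inside it terminates, since the number of vertices strictly decreases.

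I expect this chord-parity argument to be the main obstacle, together with justifying rigorously that even degree at interior vertices suffices for \(3\)-colorability. For the latter I would use induction on the number of interior vertices, or the simple connectivity of the disc, so that monodromy is trivial when all local holonomies vanish.
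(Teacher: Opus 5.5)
Your proposal is correct and follows the same route as the paper. Both arguments use the cited result that a $K_4$-free near-triangulation is word-representable if and only if it is $3$-colourable, then the classical fact that a near-triangulation is $3$-colourable if and only if every interior vertex has even degree, and then extract an induced odd wheel (or a $K_4$) from the chords of the link cycle of an odd-degree interior vertex. Your version of that last step, which repeatedly passes to the odd one of the two cycles cut off by a chord until the cycle is induced, is cleaner than the paper's cover-number argument: the paper's choice of a chord with minimum odd cover number does not by itself exclude chords with even cover number inside the chosen segment, and your iteration is exactly what closes that gap. Note that the iteration must follow the odd side rather than the shortest chord, as your final formulation does.
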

\begin{proof}
    In~\cite{diks2002new}, it was shown that \emph{a near-triangulation is \(3\)-colorable if and only if it is an even near-triangulation, that is, every inner vertex has even degree}. Combining this with Theorem~\ref{4}, it follows that \emph{a \(K_4\)-free near-triangulation is word-representable if and only if it is an even near-triangulation}. Therefore, it remains to prove that \emph{a near-triangulation is even if and only if it contains no induced odd wheel}.

$\Longrightarrow$
Suppose that \(G\) contains an induced odd wheel. Then the central vertex of the wheel has odd degree, implying that \(G\) is not an even near-triangulation.

$\Longleftarrow$
Suppose that \(G\) contains no induced odd wheel. Assume, for contradiction, that \(G\) is not an even near-triangulation. Then there exists an inner vertex \(v\) of odd degree. Since \(G\) is a near-triangulation, the neighbors of \(v\) induce a cycle, and together with \(v\), the closed neighborhood \(N[v]\) forms an odd wheel as a subgraph. Since \(G\) contains no induced odd wheel, this odd wheel must contain additional edges among the vertices of the rim cycle.

Let the vertices on the rim of the wheel be \(v_1,v_2,\dots,v_{2t+1}\) for some positive integer \(t\), and let \(c\) denote the center vertex. Since the wheel is not induced, there exist additional edges among the vertices of the rim cycle.

First observe that no additional edge can join two vertices at distance \(2\) on the rim. Indeed, if there exists an edge \(v_iv_{i+2}\), then the vertices \(v_i,v_{i+1},v_{i+2}\), together with \(c\), induce a \(K_4\), contradicting the assumption that \(G\) is \(K_4\)-free. Hence, every additional edge is of the form \(v_iv_j\), where the cyclic distance between \(v_i\) and \(v_j\) on the rim is at least \(3\).

For an additional edge \(v_iv_j\), define its \emph{covered segment} to be the shorter path on the rim cycle from \(v_i\) to \(v_j\), and define the \emph{cover number} to be the number of internal vertices on this path, that is, the number of vertices on the path excluding \(v_i\) and \(v_j\).

Now, we remove all vertices from the rim that lie strictly inside covered segments, provided they are not endpoints of any additional edge. Repeating this process, we obtain a smaller wheel whose rim consists only of endpoints of additional edges together with uncovered vertices.

If every additional edge has an even cover number, then each deletion removes an even number of vertices from the odd rim cycle. Hence, the resulting rim still has an odd length, and therefore the obtained induced subgraph is again an odd wheel, contradicting our assumption.

Therefore, there exists an odd number of additional edges having odd cover numbers. Choose such an edge \(v_iv_j\) with minimum cover number, and consider the induced subgraph on
\(
{v_i,v_{i+1},\dots,v_j,c}.
\)
By the minimality of the chosen cover number, there can be no additional edge inside the segment \(v_i,v_{i+1},\dots,v_j\). Consequently, the subgraph induced by  \(v_i,v_{i+1},\dots,v_j,c\) forms an odd wheel, again contradicting the assumption that \(G\) contains no induced odd wheel. Hence, \(G\) is an even near-triangulation.

\end{proof}

Additionally, we identify a new minimal non-word-representable graph on seven vertices ($A''$ in Figure~\ref{fig1}), which does not appear among all 25 minimal non-word-representable graphs on seven vertices listed by Kitaev et al. in \cite{kitaev2024human}.

As a consequence of Theorem~\ref{k4free}, \(K_4\)-free word-representable near-triangulations are completely characterized in terms of forbidden induced subgraphs. Thus, it remains to characterize near-triangulations containing \(K_4\). To understand the structure of such graphs, we first make the following observation.

\begin{observation}
 From the structure of \(K_4\) (\emph{see Figure~\ref{k4}}), it follows that a near-triangulation containing \(K_4\) can be viewed as being obtained from a $K_4$-free near-triangulation, by repeatedly subdividing triangular faces through the insertion of a new vertex inside a triangle and joining it to all three vertices of that triangle. 

 In~\cite{hauser2025word}, the chordal property imposes a strong structural restriction on near-triangulations. In particular, chordal near-triangulations whose outer face has length three essentially coincide with Apollonian networks, while those with an outer face of length four can be viewed as two distinct Apollonian networks attached along an edge. However, such a clean recursive structure is no longer available for general near-triangulations.
\end{observation}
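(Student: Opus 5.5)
The plan is to argue by induction on the number of vertices, reversing the subdivision operation: we repeatedly locate an inner vertex of degree~$3$ belonging to a $K_4$, delete it, and check that what remains is again a near-triangulation. When no such vertex is left, the remaining graph should be $K_4$-free, and reading the deletions backwards exhibits the original graph as a sequence of face subdivisions of the kind shown in Figure~\ref{k4}. I do not see how to close the middle step in full generality (see the last paragraph), so the argument may only establish a restricted version of the observation.

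\textbf{Step 1: position of a $K_4$ in the embedding.} Let $G$ be a near-triangulation containing a $K_4$ on vertices $a,b,c,d$. In any plane embedding of $K_4$, exactly one vertex, say $d$, lies in the bounded region of the triangle $abc$. Moreover, $d$ is an inner vertex of $G$. Hence $abc$ is either a separating triangle of $G$, or it bounds the union of the three faces $abd$, $bcd$, $cad$.

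\textbf{Step 2: deleting a degree-$3$ vertex.} Suppose $d$ has degree~$3$. Then $d$ lies in the three triangular faces $abd$, $bcd$, $cad$. Deleting $d$ merges these faces into the single triangular face $abc$, so $G-d$ is again a near-triangulation. Conversely, $G$ is obtained from $G-d$ by subdividing the face $abc$. We then apply the induction hypothesis to $G-d$: if it still contains a $K_4$ we repeat, and otherwise $G-d$ is the required $K_4$-free base graph.

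\textbf{Step 3: existence of a degree-$3$ vertex.} Assume $G$ contains a $K_4$. Among all triangles $abc$ of $G$ whose interior contains a vertex adjacent to all of $a,b,c$, choose one whose interior region $D$ contains the fewest vertices. The subgraph $H$ induced on the closed disk $D$ is a near-triangulation with outer face $abc$. I would try to show that the apex $d$ has no other neighbours inside $D$, so that $d$ has degree~$3$. Suppose instead that $d$ had a neighbour $x$ inside, say, the triangle $abd$. Then I would look for a smaller triangle enclosing a $K_4$ apex, contradicting minimality.

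\textbf{Main obstacle.} This last step is where I expect the difficulty. A neighbour $x$ of $d$ inside $abd$ creates a smaller triangle with an interior vertex, but not necessarily a $K_4$. For example, the interior of $abd$ may be a $K_4$-free even triangulation of that triangle. In that case the minimality argument stalls, and $d$ genuinely has degree larger than~$3$.

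To handle this, I would first try to strengthen the choice in Step~3: pick the $K_4$ whose enclosing triangle is innermost with respect to inclusion of separating triangles, and hope that a degree-$3$ apex must then exist. If this fails, the honest conclusion is that the observation holds only once ``$K_4$-free'' is applied to the pieces bounded by separating triangles. In other words, the process decomposes $G$ along separating triangles rather than through single degree-$3$ insertions, and the observation should be read as a heuristic description of that decomposition.
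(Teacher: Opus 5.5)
Your Step~3 cannot be closed, because the obstacle you describe is a genuine counterexample to the first sentence of the observation as literally worded. The missing idea is what turns your stall into a disproof: any graph obtained from another by at least one face subdivision contains an inner vertex of degree exactly $3$, namely the last vertex inserted. Your own example already violates this once the outer face is fixed. Take outer face $abc$ and fill the interior of $abd$ with an octahedron: three new vertices $x,y,z$ forming a triangle, with each of $a,b,d$ joined to exactly two of them. Then the inner vertices $d,x,y,z$ have degrees $5,4,4,4$, yet $\{a,b,c,d\}$ is a $K_4$. To rule out re-choosing the outer face, fill all four faces of the $K_4$ (on the sphere) with octahedra in the same way. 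The result is a triangulation whose only $K_4$ is $\{a,b,c,d\}$, since each new vertex has all its neighbours inside its own octahedron, which is $K_4$-free. Its new vertices have degree $4$ and $a,b,c,d$ have degree $9$. With minimum degree $4$, it is neither $K_4$-free nor the result of any face subdivision, so no choice of an ``innermost'' $K_4$ can produce a degree-$3$ apex.

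What is true is your fallback, and once stated precisely it has a two-line proof. Cut $G$ along separating triangles until no piece contains one. Suppose a piece contains a $K_4$ on $a,b,c,d$ with $d$ inside $abc$. Then $abc$ is not separating, so it is the outer boundary of the piece, and $abd,bcd,cad$ are not separating, so they are faces; hence the piece is $K_4$ itself. So every near-triangulation is glued along triangles from copies of $K_4$ and $K_4$-free near-triangulations. Your Step~2 is the special case in which nothing is pasted into $abd,bcd,cad$. The paper gives no argument beyond pointing at the picture of $K_4$, so it does not supply a way around this either.

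Two smaller points. First, the dichotomy in Step~1 is wrong as stated: the one-face example above, with outer face $abc$, falls in neither case. The correct statement is that $abc$ is separating or is the outer boundary, and independently each of $abd,bcd,cad$ is a face or separating. Second, your proposal does not address the second paragraph of the statement. That part follows from the known fact that the chordal maximal planar graphs are exactly the Apollonian networks (planar $3$-trees). For outer face length $4$, note that a chordal near-triangulation with outer $4$-cycle has a chord, which splits it into two chordal triangulated triangles sharing that edge.
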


As a first step toward understanding general near-triangulations, we consider the case where the outer face is a triangle. We say that a triangle is \emph{subdivided} if a new vertex is inserted inside the triangle and joined to all three vertices, as illustrated in Figure~\ref{k4}. The resulting inner triangles are then regarded as triangles obtained from face subdivisions.

From this viewpoint, such graphs can be broadly divided into the following three types.

\begin{itemize}
\item[\textbf{Type 1.}] Graphs in which no triangle arises from a face subdivision (see Figure \ref{type1}).
\item[\textbf{Type 2.}] Graphs in which every triangle arises from repeated face subdivisions (see Figure \ref{type2}).

\item[\textbf{Type 3.}] Graphs containing triangles of both kinds (see Figure \ref{type3}).

\end{itemize}

We characterize these three types of graph classes in the following lemmas.
\begin{lemma}
Let \(G\) be a Type--1 near-triangulation whose outer face has length \(3\). Then \(G\) is word-representable if and only if it is odd wheel \(W_{2k+1}\)-free.
\end{lemma}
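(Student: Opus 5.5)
The plan is to reduce the statement to Theorem~\ref{k4free} by showing that a Type--1 near-triangulation is $K_4$-free. Suppose $G$ is a Type--1 near-triangulation with outer face a triangle, and suppose $G$ contains a copy of $K_4$ on vertices $a,b,c,d$. In a plane embedding of $K_4$, exactly one vertex, say $d$, lies inside the triangle $abc$ formed by the other three; this is the picture in Figure~\ref{k4}. So $abc$ is a separating triangle of $G$ (or the outer triangle), and $d$ sits inside it adjacent to all three corners.

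The key step is to argue that this configuration is precisely what the paper calls a face subdivision. Consider the three triangles $abd$, $bcd$ and $cad$ into which $d$ splits the region of $abc$. Each is a triangle of $G$ arising from inserting a vertex inside $abc$ and joining it to $a,b,c$. The subgraph induced on the interior of each of these triangles need not be empty, but that does not matter: the definition of Type~1 forbids any triangle obtained this way. I would make this precise using the recursive description in the Observation, which builds a near-triangulation containing $K_4$ from a $K_4$-free one by repeatedly subdividing triangular faces. In that description every $K_4$ corresponds to at least one subdivision step. Hence a Type--1 graph contains no $K_4$, so $G$ is $K_4$-free.

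With $G$ known to be $K_4$-free, Theorem~\ref{k4free} applies directly. It gives that $G$ is word-representable if and only if $G$ contains no induced odd wheel $W_{2k+1}$, which is exactly the statement. Both directions are inherited from that theorem, so no new orientation argument is needed.

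The main obstacle is the formalization in the second paragraph, because the Type~1/2/3 definitions are phrased informally. To keep the argument independent of that formalization, I would prove directly that a near-triangulation is $K_4$-free if and only if none of its triangles has a vertex strictly inside it adjacent to all three corners; this follows from the planarity fact that every plane $K_4$ has one vertex inside the triangle of the others. I would then state explicitly that ``no triangle arises from a face subdivision'' means exactly this condition. The hypothesis that the outer face has length $3$ plays no essential role beyond fitting into this case split.
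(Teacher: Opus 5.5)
Your proposal is correct and is essentially the paper's argument: a plane $K_4$ always has one vertex inside the triangle of the other three, i.e.\ a subdivided triangle, so a Type--1 graph is $K_4$-free and both directions follow from Theorem~\ref{k4free}. The direct planarity formulation in your last paragraph is the right way to make this precise; drop the detour through the Observation's recursive construction, which is unnecessary and not literally valid (a near-triangulation can contain $K_4$ yet have no vertex of degree $3$, so no ``last subdivision step'' exists).
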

\begin{proof}
By definition, a Type--1 near-triangulation contains no triangle arising from a face subdivision. Consequently, \(G\) cannot contain \(K_4\) as a subgraph, since the structure of \(K_4\) (see Figure~\ref{k4}) necessarily contains a subdivided triangular face. Hence, \(G\) is \(K_4\)-free.

Therefore, the result follows directly from Theorem~\ref{k4free}, which states that a \(K_4\)-free near-triangulation is word-representable if and only if it is odd wheel \(W_{2k+1}\)-free.
\end{proof}

\begin{figure}[H]
\centering

\begin{minipage}{0.3\textwidth}
\centering
\begin{tikzpicture}[scale=0.5]

\coordinate (A) at (0,4);
\coordinate (B) at (-4,0);
\coordinate (C) at (4,0);


\coordinate (F) at (-0.3,1.6);

\coordinate (I) at (0.6,1.5);
\coordinate (J) at (0,0.8);

\draw (A)--(B)--(C)--cycle;


\draw (A)--(F);
\draw (A)--(I);
\draw (B)--(F);
\draw (B)--(J);

\draw (F)--(J);

\draw (C)--(J);
\draw (C)--(I);


\draw (F)--(I);

\draw (I)--(J);

\foreach \p in {A,B,C,F,I,J}
\node[circle,fill=black,inner sep=1.8pt] at (\p) {};


\end{tikzpicture}

\caption{Type-1}
\label{type1}
\end{minipage}
\hfill
\begin{minipage}{0.3\textwidth}
\centering

\begin{tikzpicture}[scale=0.5]

\coordinate (A) at (0,4);
\coordinate (B) at (-4,0);
\coordinate (C) at (4,0);

\coordinate (D1) at (0,2.3);
\coordinate (D2) at (0,1.3);
\coordinate (D3) at (0,0.5);

\coordinate (R1) at (1.2,2.0);
\coordinate (R2) at (1.3,1.4);

\draw (A)--(B)--(C)--cycle;

\draw (A)--(D1)--(D2)--(D3);

\draw (B)--(D1);
\draw (B)--(D2);
\draw (B)--(D3);

\draw (C)--(D1);
\draw (C)--(D2);
\draw (C)--(D3);

\draw (D1)--(R1)--(D1);
\draw (R1)--(C);
\draw (R1)--(A);
\draw (B)--(C);


\foreach \p in {A,B,C,D1,D2,D3,R1}
\node[circle,fill=black,inner sep=1.8pt] at (\p) {};


\end{tikzpicture}

\caption{Type-2}
\label{type2}
\end{minipage}
\hfill
\begin{minipage}{0.3\textwidth}
\centering

\begin{tikzpicture}[scale=0.5]

\coordinate (A) at (0,4);
\coordinate (B) at (-4,0);
\coordinate (C) at (4,0);
\coordinate (A1) at (-.4,2.3);
\coordinate (B1) at (-1.2,2.2);
\coordinate (C1) at (-1,1.5);

\coordinate (D2) at (0,1.3);
\coordinate (D3) at (0,0.5);

\coordinate (R1) at (1,2.0);
\coordinate (R2) at (1.3,1.4);

\draw (A)--(B)--(C)--cycle;
\draw (A1)--(B1)--(C1)--cycle;

\draw (A)--(D2)--(D3);


\draw (B)--(D2);
\draw (B)--(D3);
\draw (B)--(B1);
\draw (B)--(C1);
\draw (A)--(B1);
\draw (A)--(A1);

\draw (C)--(D2);
\draw (C)--(D3);
\draw (D2)--(C1);
\draw (D2)--(A1);
\draw (D2)--(R1);
\draw (R1)--(C);
\draw (R1)--(A);
\draw (B)--(C);


\foreach \p in {A,B,C,A1,B1,C1,D2,D3,R1}
\node[circle,fill=black,inner sep=1.8pt] at (\p) {};


\end{tikzpicture}

\caption{Type-3}
\label{type3}
\end{minipage}

\end{figure}
\begin{remark}
    Any Type--2 near-triangulation with an outer face of length \(3\) is precisely an Apollonian network. Since the outer face is a triangle and every triangle in the graph arises from a face subdivision, the graph is obtained by repeatedly subdividing triangular faces through the insertion of a new vertex adjacent to all three vertices of the face. This is exactly the recursive construction of Apollonian networks. Therefore, the characterization of Type--2 near-triangulations with an outer face length \(3\) follows directly from Theorem~\ref{app}.

\end{remark}

\begin{lemma}\label{l3}
  Let $G$ be a type-3 near-triangulation with an outer face length 3. Then $G$ is not word-representable.  
\end{lemma}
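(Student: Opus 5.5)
The plan is to locate, inside any Type--3 near-triangulation $G$ with outer triangle, a small induced configuration. It consists of an induced wheel $W_n$ ($n\ge 4$) together with one extra vertex adjacent to the center and to two consecutive rim vertices. I would then show that this configuration admits no semi-transitive orientation, and conclude by Theorem~\ref{ch}, since word-representability is hereditary.

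\emph{Step 1 (structure).} Since the outer face is a triangle, $G$ is a maximal planar graph. I would decompose $G$ along its separating triangles, in the spirit of the $W$-components of \cite{salam2022chordal}. Each piece is either a $K_4$ (a subdivided triangle) or a $4$-connected triangulation on at least $6$ vertices. If every piece were a $K_4$, the repeated-subdivision description in the Remark would make $G$ an Apollonian network, i.e.\ Type--2. If there were no $K_4$ at all, $G$ would be Type--1. Hence, because $G$ is Type--3, I expect a $4$-connected piece $H$ with $|V(H)|\ge 6$ to exist, together with a vertex $d$ of $G$ lying inside an inner face $abc$ of $H$ and adjacent to $a,b,c$. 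This is the subdivision vertex obtained by descending into the separating triangle $abc$ and taking the vertex adjacent to all three of $a,b,c$ there; it exists because that interior region is a subdivided triangle.

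\emph{Step 2 (the configuration).} The face $abc$ is an inner face of $H$, while $H$ has only three outer vertices and is not a triangle. So at least one of $a,b,c$, say $a$, is an inner vertex of $H$. Since $H$ is $4$-connected, $\deg_H(a)=n\ge 4$. Moreover, the neighbors of $a$ in $H$ induce a chordless cycle: a chord would create a separating triangle through $a$. Hence $N_H[a]$ induces a wheel $W_n$ with center $a$ and rim containing the edge $bc$. The vertex $d$ is adjacent, among these vertices, only to $a,b,c$. Any further adjacency would again produce a separating triangle inside $H$, or a $K_4$ not coming from the face $abc$. So $N_H[a]\cup\{d\}$ induces $W_n$ with a triangle-apex $d$ over the spoke triangle $abc$. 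For $n=5$ this should be the graph $A$ or one of its variants $A',A''$ of Figure~\ref{fig1}.

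\emph{Step 3 (non-semi-transitivity), the main obstacle.} If $n$ is odd we are already done, since odd wheels are not word-representable (as used in Theorem~\ref{k4free}). For even $n\ge 4$ I would argue directly with orientations. Take a semi-transitive orientation and consider the rim cycle $C=v_1\cdots v_n$ with $v_1=b$ and $v_2=c$. A standard argument shows that, relative to the center $a$, the rim cannot contain a directed path of length $\ge 2$ whose ends are both joined to $a$ in a way that creates a shortcut. Combined with acyclicity, this forces the rim of an even wheel to be oriented alternately, with $a$ a source or a sink relative to the whole rim. Now $\{a,b,c,d\}$ induces a $K_4$, which must be transitively oriented, and $d$ must be inserted into this transitive order. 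If $a$ is, say, the source, then $b\to c$ or $c\to b$ is fixed by the alternation. I would check the four possible positions of $d$ in the transitive order of $\{a,b,c,d\}$. In each case I would exhibit a path $a\to\cdots$ running through $d$ and then around the rim to a vertex adjacent to $a$ that is a shortcut, using that $d$ is non-adjacent to the rest of the rim. The delicate part is making the alternation claim fully rigorous for all even $n$. If that fails, I would instead cite or verify the minimality of $A,A',A''$, and reduce larger $n$ by a path-shortcut argument along the rim.
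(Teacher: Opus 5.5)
Your Step~1 framework is reasonable, with one adjustment. Choose $H$ and the triangle $abc$ via an edge of the separating-triangle decomposition tree that joins a $K_4$-piece to a non-$K_4$ piece; an arbitrary face of $H$ need not have a subdivision vertex behind it.

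\textbf{The gap is in Step~3.} The configuration you extract in Step~2 is an even wheel $N_H[a]$ plus the apex $d$ over the spoke triangle $abc$. This graph is word-representable, so no case analysis can show otherwise. An explicit semi-transitive orientation:
\begin{itemize}
\item Write the rim as $v_1=b, v_2=c, v_3,\dots,v_n$ with $n$ even, and orient $a\to v_j$ for all $j$.
\item Orient the rim alternately, so odd-indexed vertices are rim sources and even-indexed ones are rim sinks (in particular $b\to c$).
\item Set $a\to d$, $b\to d$, $d\to c$.
\end{itemize}
This orientation is acyclic and every $v_{2j}$ is a sink. The only directed path with three edges is $a\to b\to d\to c$, whose vertex set is a transitively oriented $K_4$, so there is no shortcut.

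This is exactly the ``$a$ a source, rim alternating'' situation you expected to be forced, with $d$ placed between $b$ and $c$. So your four-position check fails at that position. For $n=4$ this also follows from the fact that $W_5$ is the only non-word-representable graph on six vertices. Separately, your alternation claim is itself false: $W_4=K_{1,2,2}$ has a transitive orientation in which the center is neither a source nor a sink. Also, $A,A',A'',A'''$ are not $W_5$ plus an apex, so the identification in Step~2 is off.

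\textbf{The missing idea}, which is what the paper's proof uses, is to look across all three edges of the shared triangle rather than around one vertex. Let $h$, $i$, $g$ be the third vertices of the faces of $H$ on the far side of $ab$, $bc$, $ca$ from $abc$.
\begin{itemize}
\item Because $H$ has no separating triangle and $|V(H)|\ge 6$, these three vertices are distinct, and $h\not\sim c$, $i\not\sim a$, $g\not\sim b$. Otherwise $\{a,b,c,x\}$ would be a $K_4$ in $H$, which forces a separating triangle.
\item The vertex $d$ lies on the other side of the separating triangle $abc$, so it is adjacent to none of $g,h,i$.
\end{itemize}
Hence $\{a,b,c,d,g,h,i\}$ induces the triangular grid graph with its central cell subdivided, together with some subset of the edges $gh,hi,ig$. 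That is one of $A,A',A'',A'''$. The lemma then follows from the non-word-representability of these four graphs; the paper takes this as given, and you would still need to verify it.
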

\begin{proof}
Since all faces are triangles and both kinds of triangles occur—those arising from face subdivisions and those not arising from face subdivisions—there exists at least one triangle such that none of the triangles inside it arise from face subdivisions (see Figure~\ref{col}). Considering a planar embedding of the graph, we may assume that all faces arising from subdivisions lie inside this triangle, while all faces not arising from subdivisions lie outside it (see Figure~\ref{embedd}). As shown in Figure \ref{embedd}, depending on the presence of the dotted edges, the subgraph, induced by the vertices $a,b,c,d,g,h$ and $i$ $\in \{A,A',A'',A'''\}$. Hence, any graph considered under this lemma is non-word-representable.
\begin{figure}[H]
\centering
\begin{minipage}{0.45\textwidth}
\centering

\begin{tikzpicture}[scale=0.65]

\coordinate (A) at (0,4);
\coordinate (B) at (-4,0);
\coordinate (C) at (4,0);
\coordinate (A1) at (-.4,2.3);
\coordinate (B1) at (-1.2,2.2);
\coordinate (C1) at (-1,1.5);

\coordinate (D2) at (0,1.3);
\coordinate (D3) at (0,0.5);

\coordinate (R1) at (1,2.0);
\coordinate (R2) at (1.3,1.4);
\draw[thick,dotted] (A1)--(B1)--(C1)--cycle;

\draw(B)--(C);
\draw(A)--(C);
\draw[thick,red] (A)--(B);
\draw[thick,red] (A)--(D2);
\draw[thick,red] (B)--(D2);
\draw (D2)--(D3);

\draw[red] (B)--(D2);
\draw (B)--(D3);
\draw (B)--(B1);
\draw (B)--(C1);
\draw (A)--(B1);
\draw (A)--(A1);

\draw (C)--(D2);
\draw (C)--(D3);
\draw (D2)--(C1);
\draw (D2)--(A1);
\draw (D2)--(R1);
\draw (R1)--(C);
\draw (R1)--(A);
\draw (B)--(C);


\foreach \p in {A,B,C,A1,B1,C1,D2,D3,R1}
\node[circle,fill=black,inner sep=1.8pt] at (\p) {};

\node[above] at (A) {\tiny$a$};
\node[left] at (B) {\tiny$b$};
\node[right] at (C) {\tiny$c$};
\node[right] at (A1) {\tiny$g$};
\node[left] at (B1) {\tiny$h$};
\node[left] at (C1) {\tiny$i$};
\node[right] at (D2) {\tiny$d$};
\node[below] at (D3) {\tiny$f$};
\node[right] at (R1) {\tiny$e$};

\end{tikzpicture}

\caption{$\triangle abd$ contains no face arising from face subdivisions. The dotted edges may or may not be present.}
\label{col}
\end{minipage}
\hfill
\begin{minipage}{0.45\textwidth}
\centering

\begin{tikzpicture}[scale=0.65]

\coordinate (A) at (0,3);
\coordinate (B) at (-1.8,0);
\coordinate (C) at (2,0);
\coordinate (A1) at (3,2.3);
\coordinate (B1) at (-2.5,2.2);
\coordinate (C1) at (0,-1);

\coordinate (D2) at (0,1.3);
\coordinate (D3) at (0,0.5);

\coordinate (R1) at (.3,1.8);
\coordinate (R2) at (1.3,1.4);

\draw (A)--(B)--(C)--cycle;

\draw (A)--(D2)--(D3);

\draw (C)--(C1);
\draw (C)--(A1);
\draw (B)--(D2);
\draw (B)--(D3);
\draw (B)--(B1);
\draw (B)--(C1);
\draw (A)--(B1);
\draw (A)--(A1);
\draw[thick,red] (A)--(B);
\draw[thick,red] (A)--(C);
\draw[thick,red] (B)--(C);

\draw (C)--(D2);
\draw (C)--(D3);
\draw (D2)--(R1);
\draw (R1)--(C);
\draw (R1)--(A);


\foreach \p in {A,B,C,A1,B1,C1,D2,D3,R1}
\node[circle,fill=black,inner sep=1.8pt] at (\p) {};

\node[above] at (A) {\tiny$a$};
\node[left] at (B) {\tiny$b$};
\node[right] at (C) {\tiny$d$};
\node[right] at (A1) {\tiny$g$};
\node[left] at (B1) {\tiny$h$};
\node[below] at (C1) {\tiny$i$};
\node[right] at (D2) {\tiny$c$};
\node[below] at (D3) {\tiny$f$};
\node[right] at (R1) {\tiny$e$};
\draw[thick,dotted,bend right=90] (A1) to (B1);
\draw[thick,dotted,bend right=100] (B1) to (C1);
\draw[thick,dotted,bend right=90] (C1) to (A1);

\end{tikzpicture}

\caption{A planar embedding in which all triangles arising from face subdivisions lie inside $\triangle abd$, while triangles not arising from face subdivisions, together with possible dotted edges, lie outside $\triangle abc$.}
\label{embedd}
\end{minipage}

\end{figure}
\end{proof}
\begin{remark}
   From Lemma~\ref{l3}, it follows that a word-representable near-triangulation may contain either triangles arising from face subdivisions or triangles not arising from face subdivisions, but not both simultaneously. Hence, a near-triangulation with an outer face of length at most \(3\) is word-representable if and only if it is locally comparability and \({\mathcal{F}_1,A,A',A'',A'''}\)-free. Note that every locally comparability graph is necessarily odd-wheel-free, since odd cycles are not comparability graphs and the neighborhood of the central vertex in an odd wheel induces an odd cycle.
\end{remark}
In the following main characterization theorem, we extend the characterization from near-triangulations with an outer face of length at most \(3\) to general near-triangulations with an arbitrary outer face length.
\begin{theorem}
Let $G$ be a near-triangulation. Then $G$ is word-representable if and only if it contains none of the graphs from Figure~\ref{fig1} as an induced subgraph.
\end{theorem}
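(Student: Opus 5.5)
The plan is to prove the two directions separately, and to reduce the general case to the outer-face-length-$3$ case treated in Lemma~\ref{l3} and the subsequent remark.

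\emph{Necessity.} This direction is immediate once we know every graph in Figure~\ref{fig1} is non-word-representable. For the odd wheels $W_{2k+1}$, the neighbourhood of the centre is an odd cycle. An odd cycle is not a comparability graph, so the graph is not locally comparability, and it is known that word-representable graphs are locally comparability. For $\mathcal{F}_1$, $A$, $A'$, $A''$, $A'''$, $B$, $B'$, $\mathcal{F}_2$ we use Theorem~\ref{app}, the chordal characterization of~\cite{hauser2025word}, and, for the new graph $A''$, a direct check that it admits no semi-transitive orientation (Theorem~\ref{ch}). That check is a finite case analysis on orientations of its triangles. Since word-representability is hereditary, $G$ cannot contain any of these as an induced subgraph.

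\emph{Sufficiency.} Assume $G$ is a near-triangulation avoiding all graphs of Figure~\ref{fig1}. We induct on the number of separating triangles and chords, following the decomposition of Salam et al.~\cite{salam2022chordal}, and construct a semi-transitive orientation.
\begin{enumerate}
\item Every separating triangle $T$ bounds a near-triangulation with outer face $T$. By Lemma~\ref{l3} and the remark after it, this piece is either Type~1 or Type~2, so it is either $K_4$-free and odd-wheel-free, or an Apollonian network avoiding $\mathcal{F}_1$ and locally comparability. In either case it is word-representable by Theorem~\ref{k4free} or Theorem~\ref{app}.
\item Removing the interiors of all separating triangles leaves a graph $G_0$ with no separating triangles. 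Splitting $G_0$ along chords of the outer face, which are edge separators, gives $W$-components. Each $W$-component is $K_4$-free, since any $K_4$ in a near-triangulation yields a separating triangle unless the graph is $K_4$ itself. So each $W$-component is word-representable by Theorem~\ref{k4free}; in fact it is $3$-colourable by~\cite{diks2002new}, giving an explicit orientation from the colour order $1\to2\to3$.
\item We glue the pieces back along edges and triangles. Gluing along an edge (a $K_2$) preserves semi-transitivity once the orientations agree on that edge. This follows by reversing one side if necessary and using that any shortcut would have to pass through the two-vertex cut and hence lie in one side.
\end{enumerate}

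\emph{Main obstacle.} The hard step is gluing along triangles, since word-representability is not closed under clique sums. The plan is to show that the forbidden configurations $A$, $A'$, $A''$, $A'''$, $B$, $B'$, $\mathcal{F}_2$ are exactly the local obstructions that arise when a subdivided triangle, whose Apollonian interior forces a particular transitive order on its three vertices, is glued to an outer piece. In the outer piece, the vertex of $T$ playing the source or sink role must not be the centre of a neighbourhood creating a long path with a shortcut.

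Concretely, I would first try the following. Orient each Apollonian interior from a $3$-colouring-like order, namely by the insertion-order labelling used in~\cite{hauser2025word}. Then show that the outer $3$-colourable piece can be recoloured so that, on each glued triangle, its colour order matches the order required from inside. A shortcut that crosses $T$ must then enter and leave through two vertices of $T$, so it can be rerouted inside one piece, contradicting semi-transitivity there.

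The difficulty is consistency when several glued triangles share vertices or an outer piece has even-degree constraints that fix its colouring up to permutation. Exactly then, a conflicting demand should produce one of $A,\dots,A'''$ or $B,B'$ as an induced subgraph. Verifying this correspondence exhaustively is the crux, and I would do it by examining the two or three face-adjacent triangles around the conflicting vertices, as in Figures~\ref{col} and~\ref{embedd}.
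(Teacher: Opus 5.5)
Your proposal has a genuine gap. The sufficiency direction rests on a gluing principle that is false, and the step you call the crux is left as a plan that cannot work as formulated.

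\textbf{The rerouting argument fails.} In Step~3 you argue that a shortcut crossing $\{u,v\}$ can be rerouted into one side. Rerouting only shows that the shortened path $\cdots\to u\to v\to\cdots$ is transitive there. The original path is a shortcut precisely because it contains non-adjacent vertices from both sides. For example, take $x\to u\to v$, $x\to v$ on one side and $u\to w\to v$ on the other; then $x\to u\to w\to v$ with $x\to v$ is a shortcut. In your scheme edge-gluing can be avoided, since $G_0$ is $K_4$-free or equal to $K_4$ and Theorem~\ref{k4free} applies to it directly. But you propose the same rerouting argument for triangles.

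\textbf{The failure is real, not just an artefact of the argument.} Let $T_3$ be $K_4$ on $\{a,b,c,d\}$ with $d$ inside the face $abc$, plus an ear (a vertex adjacent only to the two ends) on each of $ab$, $bc$, $ca$. This is a near-triangulation. Let $x<y<z$ be the order of $a,b,c$ in a semi-transitive orientation.
\begin{enumerate}
\item The $xz$-ear cannot be a source or sink, since that gives a shortcut through $y$. So it lies between $x$ and $z$, and this forces $x<d<z$.
\item The $xy$-ear cannot lie between $x$ and $y$, since $z$ is a common out-neighbour of $x,y$. So it is a source or sink, which rules out $x<d<y$.
\item Symmetrically, $x$ is a common in-neighbour of $y,z$, which rules out $y<d<z$.
\end{enumerate}
Together these are contradictory, so $T_3$ is not word-representable. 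Yet deleting the $ca$-ear leaves a word-representable graph: orient the $K_4$ as $b\to a\to d\to c$, make the $ab$-ear a sink, and put the $bc$-ear between $b$ and $c$. So word-representability is not preserved even when gluing a triangle along an edge.

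\textbf{Your triangle-gluing plan cannot succeed with this list.} In your decomposition, $T_3$ is exactly the triangle-gluing case: $K_4$ glued into the $3$-sun along the separating triangle $abc$, with both pieces word-representable. But $T_3$ contains none of $A,\dots,A''',B,B'$, and as far as I can check none of the graphs drawn in Figure~\ref{fig1}. It has $7$ vertices, $12$ edges and no induced $W_5$. The only $7$-vertex, $12$-edge graph in the list is $G$, whose ears sit on the three edges at one vertex rather than around a face. In fact $T_3$ is $B'$ with vertex $6$ deleted. So the correspondence you want to verify (every conflict yields one of $A,\dots,A'''$ or $B,B'$) is false. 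Sufficiency cannot be completed with the list as drawn; $T_3$ would have to be added. The paper's own argument, an induction on the outer-face length, is only sketched and does not address this case either.

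\textbf{Necessity is also incomplete.} You do not treat $B'',C,C',D,E,F,G,H,\mathcal{F}_3$ at all. For $F$ and $G$ it is easy: the neighbourhood of vertex $5$ is the $3$-sun, respectively the net, and neither is a comparability graph. Also, $A$ contains the induced $4$-cycle $1\,2\,4\,5$, so it is not chordal. Its non-representability therefore cannot come from the chordal characterization.
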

\begin{proof}[Sketch of the proof:]
The proof proceeds by induction on the length of the outer face. By Remark~2, near-triangulations with an outer face of length \(3\) are completely characterized. This establishes the initial structural framework of the argument.

For the base cases of the induction, we first characterize near-triangulations whose outer face has length \(4\) and \(5\), showing that such graphs are word-representable if and only if they contain none of the graphs from Figure~\ref{fig1} as induced subgraphs.

We next set our induction hypothesis: every near-triangulation with an outer face length at most \(n\) is word-representable if and only if it contains none of the graphs from Figure~\ref{fig1} as an induced subgraph. Using the structural decomposition of near-triangulations along boundary triangles together with the forbidden induced subgraph conditions, we then prove that the characterization also holds for near-triangulations with an outer face length \(n+1\). This completes the induction.

\end{proof}
\begin{figure}[H]
\begin{tabular}{cccc}
\centering

\begin{minipage}{0.25\textwidth}
\centering
\begin{tikzpicture}[scale=.9, every node/.style={inner sep=1pt},
    font=\fontsize{1}{7}\tiny]
    \node[circle, fill] (T) at (0,2.6) {};
    \node[circle, fill] (L) at (-2,0) {};
    \node[circle, fill] (M) at (0,0) {};
    \node[circle, fill] (R) at (2,0) {};

    \node[circle, fill] (A) at (-1,1.3) {};
    \node[circle, fill] (B) at (1,1.3) {};
    \node[circle, fill] (C) at (0,.8) {};
    
    \node[above=2pt] at (T) {$3$};
    \node[below=2pt] at (L) {$1$};
    \node[below=2pt] at (M) {$6$};
    \node[below=3pt] at (R) {$5$};
    \node[left=2pt] at (A) {$2$};
    \node[right=1pt] at (B) {$4$};
    \node[above=2pt] at (C) {$7$};
    \draw (T) -- (B);
    \draw (R) -- (L);
    \draw (A) -- (L);
    \draw (T) -- (L);
    \draw (T) -- (R);
    \draw (A) -- (B);
    \draw (T) -- (A);
    \draw (R) -- (B);
    \draw (A) -- (C) -- (B);
    \draw (L) -- (M) -- (R);
    \draw (A) -- (M);
    \draw (C) -- (M);
    \draw (B) -- (M);
\end{tikzpicture}

\small\textbf{$A$}
\end{minipage}
\hfill
\begin{minipage}{0.25\textwidth}
\centering
\begin{tikzpicture}[scale=.9, every node/.style={inner sep=1pt},
    font=\fontsize{1}{7}\tiny]
    \node[circle, fill] (T) at (0,2.6) {};
    \node[circle, fill] (L) at (-2,0) {};
    \node[circle, fill] (M) at (0,.4) {};
    \node[circle, fill] (R) at (2,0) {};

    \node[circle, fill] (A) at (-1,1.3) {};
    \node[circle, fill] (B) at (1,1.3) {};
    \node[circle, fill] (C) at (0,0.9) {};
     \node[above=2pt] at (T) {$3$};
    \node[below=2pt] at (L) {$1$};
    \node[below=2pt] at (M) {$6$};
    \node[below=3pt] at (R) {$5$};
    \node[left=2pt] at (A) {$2$};
    \node[right=1pt] at (B) {$4$};
    \node[above=2pt] at (C) {$7$}; 
    \draw (T) -- (B);
    \draw (R) -- (L);
    \draw (A) -- (L);
    \draw (T) -- (L);
    \draw (T) -- (R);
    \draw (A) -- (B);
    \draw (T) -- (A);
    \draw (R) -- (B);
    \draw (A) -- (C) -- (B);
    \draw (L) -- (M) -- (R);
    \draw (A) -- (M);
    \draw (C) -- (M);
    \draw (B) -- (M);
\end{tikzpicture}

\small\textbf{$A'$}
\end{minipage}
\hfill
\begin{minipage}{0.25\textwidth}
\centering
\begin{tikzpicture}[scale=.9, every node/.style={inner sep=1pt},
    font=\fontsize{1}{7}\tiny]
    \node[circle, fill] (T) at (0,2.6) {};
    \node[circle, fill] (L) at (-2,0) {};
    \node[circle, fill] (M) at (0,0) {};
    \node[circle, fill] (R) at (2,0) {};

    \node[circle, fill] (A) at (-.5,1.1) {};
    \node[circle, fill] (B) at (.5,1.1) {};
    \node[circle, fill] (C) at (0,0.8) {};
\node[above=2pt] at (T) {$3$};
    \node[below=2pt] at (L) {$1$};
    \node[below=2pt] at (M) {$6$};
    \node[below=3pt] at (R) {$5$};
    \node[left=2pt] at (A) {$2$};
    \node[right=1pt] at (B) {$4$};
    \node[above=2pt] at (C) {$7$};
    \draw (T) -- (B);
    \draw (R) -- (L);
    \draw (A) -- (L);
    \draw (T) -- (L);
    \draw (T) -- (R);
    \draw (A) -- (B);
    \draw (T) -- (A);
    \draw (R) -- (B);
    \draw (A) -- (C) -- (B);
    \draw (A) -- (M);
    \draw (C) -- (M);
    \draw (B) -- (M);
\end{tikzpicture}

\small\textbf{$A''$}
\end{minipage}
\hfill
\begin{minipage}{0.25\textwidth}
\centering
\begin{tikzpicture}[scale=.9, every node/.style={inner sep=1pt},
    font=\fontsize{1}{7}\tiny]
    \node[circle, fill] (T) at (0,2.6) {};
    \node[circle, fill] (L) at (-2,0) {};
    \node[circle, fill] (M) at (0,.4) {};
    \node[circle, fill] (R) at (2,0) {};

    \node[circle, fill] (A) at (-.5,1.1) {};
    \node[circle, fill] (B) at (.5,1.1) {};
    \node[circle, fill] (C) at (0,0.8) {};
\node[above=2pt] at (T) {$3$};
    \node[below=2pt] at (L) {$1$};
    \node[below=2pt] at (M) {$6$};
    \node[below=3pt] at (R) {$5$};
    \node[left=2pt] at (A) {$2$};
    \node[right=1pt] at (B) {$4$};
    \node[above=2pt] at (C) {$7$};
    \draw (T) -- (B);
    \draw (R) -- (L);
    \draw (A) -- (L);
    \draw (T) -- (L);
    \draw (T) -- (R);
    \draw (A) -- (B);
    \draw (T) -- (A);
    \draw (R) -- (B);
    \draw (A) -- (C) -- (B);
    \draw (L) -- (M) -- (R);
    \draw (A) -- (M);
    \draw (C) -- (M);
    \draw (B) -- (M);
\end{tikzpicture}

\small\textbf{$A'''$}
\end{minipage}

\end{tabular}

   \begin{tabular}{ccccc}

\begin{minipage}{0.18\textwidth}
\centering
\begin{tikzpicture}[scale=1.2, every node/.style={inner sep=1pt},
    font=\fontsize{1}{7}\tiny]

    \node[circle, fill] (T)  at (0,2)    {};
    \node[circle, fill] (BL) at (-1.2,0) {};
    \node[circle, fill] (BR) at (1.2,0)  {};
    \node[circle, fill] (B)  at (0,-0.5) {};

    \node[circle, fill] (A1) at (-0.2,1.3) {};
    \node[circle, fill] (A3) at (0,1)      {};
    \node[circle, fill] (A4) at (-0.3,0.4) {};
    \node[circle, fill] (A5) at (0.3,0.5)  {};
     \node[above=2pt] at (T) {$2$};
    \node[below=2pt] at (BL) {$1$};
    \node[below=2pt] at (BR) {$3$};
    \node[below=3pt] at (B) {$4$};
    \node[left=2pt] at (A1) {$8$};
    \node[right=2pt] at (A3) {$7$};
    \node[below=2pt] at (A4) {$5$};
    \node[above=2pt] at (A5) {$6$};
    \draw (BL) -- (BR) -- (B) -- (BL);

    \draw (T) -- (A1);
    \draw (T) -- (A3);
    \draw (T) -- (BL);
    \draw (T) -- (BR);

    \draw (A1) -- (A3);
    \draw (A4) -- (A3);
    \draw (A4) -- (BR);
    \draw (A4) -- (BL);

    \draw (A5) -- (A3);
    \draw (A5) -- (BR);
    \draw (A5) -- (A4);

    \draw (BL) -- (A1);
    \draw (BL) -- (A3);
    \draw (BR) -- (A3);

\end{tikzpicture}

\small \textbf{$B$}
\end{minipage}
&
\begin{minipage}{0.18\textwidth}
\centering
\begin{tikzpicture}[scale=1.2, every node/.style={inner sep=1pt},
    font=\fontsize{1}{7}\tiny]
    \node[circle, fill] (T)  at (-0.15,1.5) {};
    \node[circle, fill] (A1) at (0,0)       {};
    \node[circle, fill] (A2) at (1,1.7)     {};
    \node[circle, fill] (A3) at (2,0)       {};
    \node[circle, fill] (TL) at (2.1,1.5)   {};
    \node[circle, fill] (L)  at (1,1.3)     {};
    \node[circle, fill] (R)  at (1,0.5)     {};
    \node[circle, fill] (R1) at (1,0.9)     {};
\node[above=2pt] at (T) {$2$};
    \node[right=2pt] at (TL) {$4$};
    \node[below=2pt] at (R) {$6$};
    \node[right=3pt] at (R1) {$7$};
    \node[left=2pt] at (A1) {$1$};
    \node[right=2pt] at (A3) {$5$};
    \node[above=2pt] at (A2) {$3$};
    \node[right=2pt] at (L) {$8$};
    \draw (T) -- (A1) -- (A2) -- (A3) -- (TL);
    \draw (T) -- (A2) -- (TL);

    \draw (A2) -- (L) -- (R);
    \draw (A1) -- (R) -- (A3);
    \draw (A1) -- (L) -- (A3);
    \draw (A1) -- (R1) -- (A3);

    \draw (A1) -- (A3);

\end{tikzpicture}

\small \textbf{$B'$}
\end{minipage}
&
\begin{minipage}{0.18\textwidth}
\centering

\begin{tikzpicture}[scale=1.2, every node/.style={inner sep=1pt},
    font=\fontsize{1}{7}\tiny]
    \node[circle, fill] (T)  at (0,2)   {};
    \node[circle, fill] (A1) at (0,0)   {};
    \node[circle, fill] (A2) at (1,1.7) {};
    \node[circle, fill] (A3) at (2,0)   {};
    \node[circle, fill] (TL) at (2,2)   {};
    \node[circle, fill] (L)  at (1,1.3) {};
    \node[circle, fill] (R)  at (1,0.5) {};
    \node[circle, fill] (R1) at (1,0.9) {};
\node[above=2pt] at (T) {$2$};
    \node[right=2pt] at (TL) {$3$};
    \node[below=2pt] at (R) {$5$};
    \node[right=3pt] at (R1) {$6$};
    \node[left=2pt] at (A1) {$1$};
    \node[right=2pt] at (A3) {$4$};
    \node[above=2pt] at (A2) {$8$};
    \node[right=2pt] at (L) {$7$};
    \draw (T) -- (A1) -- (A2) -- (A3) -- (TL) -- (T);
    \draw (T) -- (A2) -- (TL);

    \draw (A2) -- (L) -- (R);
    \draw (A1) -- (R) -- (A3);
    \draw (A1) -- (L) -- (A3);
    \draw (A1) -- (R1) -- (A3);

    \draw (A1) -- (A3);

\end{tikzpicture}

\small \textbf{$B''$}
\end{minipage}

&
\begin{minipage}{0.18\textwidth}
\centering
  \begin{tikzpicture}[scale=1.3, every node/.style={inner sep=1pt},
    font=\fontsize{1}{7}\tiny]
    \node[circle, fill] (T) at (-.15,1.5) {};        
    \node[circle, fill] (A1) at (0,0) {};       
    \node[circle, fill] (A2) at (1,1.7) {};       
    \node[circle, fill] (A3) at (2,0) {};       
    \node[circle, fill] (TL) at (2.1,1.5) {};        
     \node[circle, fill] (L) at (.7,.7) {};  
     \node[circle, fill] (R1) at (1.3,.7) {};  
    \node[circle, fill] (R) at (1,.5) {};  
    \node[above=2pt] at (T) {$2$};
    \node[right=2pt] at (TL) {$4$};
    \node[below=2pt] at (R) {$6$};
    \node[right=3pt] at (R1) {$7$};
    \node[left=2pt] at (A1) {$1$};
    \node[right=2pt] at (A3) {$5$};
    \node[above=2pt] at (A2) {$3$};
    \node[right=2pt] at (L) {$8$};

    \draw (T) -- (A1) -- (A2) -- (A3) -- (TL);
    \draw (T) -- (A2) -- (TL);
    \draw (A2)  -- (R);
    \draw (A1) -- (R) -- (A3);
    \draw (A1)  -- (A3);
    \draw (A1)  -- (A3);
    \draw (A1) -- (A3);
    \draw (A1) -- (A3);
    \draw (A1) -- (A3);
    \draw (A1) -- (L);
    \draw (L) -- (R);
    \draw (L) -- (A2);
    \draw (R) -- (R1);
    \draw (A2) -- (R1);
    \draw (A3) -- (R1);
    \end{tikzpicture} 
    \small \textbf{$C$} 
    \end{minipage}
 &
\begin{minipage}{0.18\textwidth}
\centering
     \begin{tikzpicture}[scale=1.2, every node/.style={inner sep=1pt},
    font=\fontsize{1}{7}\tiny]
    \node[circle, fill] (T) at (0,2) {};        
    \node[circle, fill] (A1) at (0,0) {};       
    \node[circle, fill] (A2) at (1,1.7) {};       
    \node[circle, fill] (A3) at (2,0) {};       
    \node[circle, fill] (TL) at (2,2) {};        
     \node[circle, fill] (L) at (.7,.7) {};  
     \node[circle, fill] (R1) at (1.3,.7) {};  
    \node[circle, fill] (R) at (1,.5) {};        
  \node[above=2pt] at (T) {$2$};
    \node[right=2pt] at (TL) {$3$};
    \node[below=2pt] at (R) {$5$};
    \node[right=3pt] at (R1) {$6$};
    \node[left=2pt] at (A1) {$1$};
    \node[right=2pt] at (A3) {$4$};
    \node[above=2pt] at (A2) {$8$};
    \node[right=2pt] at (L) {$7$};
    \draw (T) -- (A1) -- (A2) -- (A3) -- (TL);
    \draw (T) -- (A2) -- (TL);
    \draw (A2)  -- (R);
    \draw (A1) -- (R) -- (A3);
    \draw (A1)  -- (A3);
    \draw (A1)  -- (A3);
    \draw (A1) -- (A3);
    \draw (A1) -- (A3);
    \draw (A1) -- (A3);
    \draw (A1) -- (L);
    \draw (L) -- (R);
    \draw (L) -- (A2);
    \draw (R) -- (R1);
    \draw (A2) -- (R1);
    \draw (A3) -- (R1);
    \draw (T) -- (TL);
    \end{tikzpicture} 
    \small \textbf{$C'$}
    \end{minipage}

\end{tabular}

  \vspace{1em}    
 \begin{tabular}{ccccc}

\begin{minipage}{0.18\textwidth}
\centering
     \begin{tikzpicture}[scale=1.2, every node/.style={inner sep=1pt},
    font=\fontsize{1}{7}\tiny]
    \node[circle, fill] (UL) at (0,2) {};        
    \node[circle, fill] (BL) at (0,0) {};       
    \node[circle, fill] (UR) at (2,2) {};       
    \node[circle, fill] (BR) at (2,0) {};      
    \node[circle, fill] (C) at (1,.5) {};        
     \node[circle, fill] (UC) at (1,1.6) {};  
     \node[circle, fill] (UC1) at (1.25,1.2) {};  
     \node[above=2pt] at (UL) {$2$};
    \node[left=2pt] at (BL) {$1$};
    \node[right=2pt] at (UR) {$3$};
    \node[right=3pt] at (BR) {$4$};
    \node[left=2pt] at (C) {$5$};
    \node[above=2pt] at (UC) {$7$};
    \node[above=2pt] at (UC1) {$6$};

    \draw (UL) -- (BL) -- (BR) -- (UR) -- (UL);
    \draw (UL) -- (C);
    \draw (UR) -- (C);
    \draw (BL) -- (C);
    \draw (BR) -- (C);
     \draw (UL) -- (UC);
    \draw (UR) -- (UC);
    \draw (C) -- (UC);
     \draw (C) -- (UC1);
     \draw (UC) -- (UC1);
    
    \end{tikzpicture} 
    \small \textbf{$D$} 
    \end{minipage}
 &
   
\begin{minipage}{0.18\textwidth}
\centering
      \begin{tikzpicture}[scale=1.2, every node/.style={inner sep=1pt},
    font=\fontsize{1}{7}\tiny]
    \node[circle, fill] (UL) at (0,2) {};        
    \node[circle, fill] (BL) at (0,0) {};       
    \node[circle, fill] (UR) at (2,2) {};       
    \node[circle, fill] (BR) at (2,0) {}; 
    
    \node[circle, fill] (C) at (1.05,.6) {};        
    \node[circle, fill] (SC) at (1.75,.22) {};
    \node[circle, fill] (C1) at (1.4,.8) {};    
    \node[circle, fill] (C2) at (1.55,.6) {};
    \node[above=2pt] at (UL) {$2$};
    \node[left=2pt] at (BL) {$1$};
    \node[right=2pt] at (UR) {$3$};
    \node[right=3pt] at (BR) {$4$};
    \node[left=2pt] at (C) {$8$};
    \node[right=2pt] at (SC) {$5$};
    \node[above=2pt] at (C1) {$7$};
    \node[right=2pt] at (C2) {$6$};

    \draw (UL) -- (BL) -- (BR) -- (UR) -- (UL);
    \draw (UR) -- (C);
    \draw (BL) -- (C);
     \draw (SC) -- (C);
      \draw (UR) -- (SC);
    \draw (BL) -- (SC);
     \draw (SC) -- (BR);
    \draw (C1) -- (C);
    \draw (C1) -- (C2);
    \draw (C1) -- (UR);
    \draw (C2) -- (C);
    \draw (C2) -- (SC);
    \draw (C2) -- (UR);
     \draw (BL) -- (UR);
    \end{tikzpicture} 
    \small \textbf{$E$}
    \end{minipage} 
&
\begin{minipage}{0.19\textwidth}
\centering
        \begin{tikzpicture}[scale=1.2, every node/.style={inner sep=1pt},
    font=\fontsize{1}{7}\tiny]
    \node[circle, fill] (1) at (0,0) {};      
    \node[circle, fill] (2) at (2.6,0) {};       
    \node[circle, fill] (3) at (1.3,2) {};        
    \node[circle, fill] (4) at (1.3,.8) {};        

    \node[circle, fill] (5) at (1,1.1) {};     
    \node[circle, fill] (6) at (1.6,1.1) {};      
    \node[circle, fill] (7) at (1.3,.4) {};      
    \node[left=2pt] at (1) {$1$};
    \node[below=2pt] at (2) {$3$};
    \node[above=2pt] at (3) {$2$};
    \node[right=3pt] at (4) {$5$};
    \node[left=2pt] at (5) {$6$};
    \node[right=2pt] at (6) {$7$};
    \node[below=2pt] at (7) {$4$};
    
    \draw (1) -- (2);
    \draw (2) -- (3);
    \draw (3) -- (1);
     \draw (1) -- (4);
    \draw (2) -- (4);
    \draw (3) -- (4);
    \draw (5) -- (1);   
    \draw (5) -- (3); 
    \draw (5) -- (4);
\draw (6) -- (3);   
    \draw (6) -- (2); 
    \draw (6) -- (4);
    \draw (7) -- (1);   
    \draw (7) -- (2); 
    \draw (7) -- (4);

\end{tikzpicture} 
\small \textbf{$F$}
\end{minipage}
 &
\vspace{1em}

\begin{minipage}{0.19\textwidth}
\centering
\begin{tikzpicture}[scale=1.2, every node/.style={inner sep=1pt},
    font=\fontsize{1}{7}\tiny]
    \node[circle, fill] (1) at (0,0) {};      
    \node[circle, fill] (2) at (2.6,0) {};       
    \node[circle, fill] (3) at (1.3,2) {};        
    \node[circle, fill] (4) at (1.3,.8) {};        

    \node[circle, fill] (5) at (1,1.1) {};     
    \node[circle, fill] (6) at (1.6,1.1) {};      
    \node[circle, fill] (7) at (1.3,.4) {};      
    \node[below=2pt] at (1) {$1$};
    \node[below=2pt] at (2) {$3$};
    \node[above=2pt] at (3) {$2$};
    \node[right=3pt] at (4) {$5$};
    \node[left=2pt] at (5) {$6$};
    \node[right=2pt] at (6) {$7$};
    \node[below=2pt] at (7) {$4$};
    \draw (1) -- (2);
    \draw (2) -- (3);
    \draw (3) -- (1);
     \draw (1) -- (4);
    \draw (2) -- (4);
    \draw (3) -- (4);
    \draw (5) -- (1);   
     
    \draw (5) -- (4);
    \draw (6) -- (3);   
    
    \draw (6) -- (4);
    \draw (7) -- (2); 
    \draw (7) -- (4);

\end{tikzpicture} 
\small \textbf{$G$} 
\end{minipage}
 &

\begin{minipage}{0.19\textwidth}
\centering
\begin{tikzpicture}[scale=1.2, every node/.style={inner sep=1pt},
    font=\fontsize{1}{7}\tiny]
    \node[circle, fill] (1) at (0,0) {};      
    \node[circle, fill] (2) at (2.6,0) {};       
    \node[circle, fill] (3) at (1.3,2) {};        
    \node[circle, fill] (4) at (1.3,.3) {};        

    \node[circle, fill] (5) at (1,.6) {};     
    \node[circle, fill] (6) at (1.6,.6) {};      
    \node[circle, fill] (7) at (.8,.8) {};      
     \node[circle, fill] (8) at (1.8,.8) {};
         \node[below=2pt] at (1) {$1$};
    \node[below=2pt] at (2) {$3$};
    \node[above=2pt] at (3) {$2$};
    \node[below=3pt] at (4) {$4$};
    \node[below=2pt] at (5) {$5$};
    \node[right=2pt] at (6) {$7$};
    \node[below=2pt] at (7) {$6$};
    \node[right=2pt] at (8) {$8$};
    \draw (1) -- (2);
    \draw (2) -- (3);
    \draw (3) -- (1);
     \draw (1) -- (4);
    \draw (2) -- (4);
    \draw (3) -- (4);
    \draw (5) -- (1);   
    \draw (5) -- (3); 
    \draw (5) -- (4);
    \draw (6) -- (3);   
    \draw (6) -- (2); 
    \draw (6) -- (4);
    \draw (7) -- (1);   
    \draw (7) -- (3); 
    \draw (7) -- (5);
        \draw (8) -- (2);   
    \draw (8) -- (6); 
    \draw (8) -- (3);
    
\end{tikzpicture} 
\small \textbf{$H$} 
\end{minipage}

\end{tabular}
 \begin{tabular}{cccc}
  \begin{minipage}{0.22\textwidth}
\centering
\begin{tikzpicture}[scale=1.6, every node/.style={inner sep=1pt},
    font=\fontsize{1}{7}\tiny]

    \node[circle, fill] (1) at (1,2)   {};
    \node[circle, fill] (2) at (0,0)   {};
    \node[circle, fill] (3) at (1,-0.5){};
    \node[circle, fill] (4) at (2.2,0) {};

    \node[circle, fill] (5) at (1.5,0.5){};
    \node[circle, fill] (6) at (1,0.8)  {};
    \node[circle, fill] (7) at (0.6,0.8){};
    \node[circle, fill] (8) at (0.9,1.3){};
     \node[above=2pt] at (1) {$2$};
    
    \node[left=1pt] at (8) {$3$};
    \node[below=2pt] at (2) {$1$};
    \node[below=2pt] at (3) {$k+2$};
    \node[below=3pt] at (4) {$k+1$};
    \node[right=2pt] at (5) {$k$};
    \node[below=1pt] at (6) {$5$};
    \node[left=2pt] at (7) {$4$};
    
    \draw (1) -- (2) -- (3) -- (4) -- (1);

    \draw (1) -- (7) -- (2);
    \draw (1) -- (6) -- (2);
    \draw (1) -- (5) -- (2);
    \draw (1) -- (4) -- (2);

    \draw (1) -- (8) -- (7);
    \draw (1) -- (8) -- (6);

    \draw (7) -- (6);
    \draw (5) -- (4);
    \draw (2) -- (4);

    \draw[dotted, thick] (5) -- (6);

\end{tikzpicture}

\small \textbf{$\mathcal{F}_1$}
\end{minipage}
&

\begin{minipage}{0.22\textwidth}
\centering
\begin{tikzpicture}[scale=1.6, every node/.style={inner sep=1pt},
    font=\fontsize{1}{7}\tiny]

    \node[circle, fill] (1) at (1,2)   {};
    \node[circle, fill] (2) at (0,0)   {};
    \node[circle, fill] (3) at (1,-0.5){};
    \node[circle, fill] (4) at (2.2,0) {};

    \node[circle, fill] (5) at (1.5,0.5){};
    \node[circle, fill] (6) at (1,1.5)  {};
    \node[circle, fill] (7) at (1,1)    {};
    \node[circle, fill] (8) at (0.7,0.5){};
    \node[circle, fill] (9) at (1,0.7)  {};
    \node[above=2pt] at (1) {$k+1$};
    
    \node[below=1pt] at (8) {$5$};
    \node[below=2pt] at (2) {$1$};
    \node[below=2pt] at (3) {$2$};
    \node[below=2pt] at (4) {$3$};
    \node[right=2pt] at (5) {$4$};
    \node[left=.1pt] at (6) {$k$};
    \node[right=2pt] at (7) {$7$};
    \node[below=2pt] at (9) {$6$};

    \draw (1) -- (2) -- (3) -- (4) -- (1);

    \draw (1) -- (6) -- (2);
    \draw (1) -- (6) -- (5) -- (1);
    \draw (2) -- (7) -- (5) -- (4) -- (2);
    \draw (2) -- (8) -- (5) -- (2);

    \draw (9) -- (8);
    \draw (7) -- (9);
    \draw (5) -- (9);
    \draw (7) -- (8);

    \draw[dotted, thick] (7) -- (6);

\end{tikzpicture}

\small \textbf{$\mathcal{F}_2$}
\end{minipage}
&

\begin{minipage}{0.25\textwidth}
\centering
\begin{tikzpicture}[scale=1.7, every node/.style={inner sep=1pt},
    font=\fontsize{1}{7}\tiny]
    \node[circle, fill] (1) at (0,0) {};      
    \node[circle, fill] (2) at (2.6,0) {};       
    \node[circle, fill] (3) at (1.3,2) {};        
    \node[circle, fill] (4) at (1.3,.3) {};        

    \node[circle, fill] (5) at (1.1,.9) {};     
    \node[circle, fill] (6) at (1.5,.9) {};      
    \node[circle, fill] (7) at (.9,.7) {};      
     \node[circle, fill] (8) at (1.8,.7) {};
     \node[circle, fill] (9) at (.5,.4) {};      
     \node[circle, fill] (10) at (2.2,.4) {};

     \node[below=2pt] at (1) {$2k+2$};
    
    \node[left=1pt] at (8) {$2k-1$};
    \node[below=2pt] at (2) {$2k+4$};
    \node[right=2pt] at (3) {$2k+3$};
    \node[below=2pt] at (4) {$2k+1$};
    \node[right=2pt] at (5) {$1$};
    \node[left=.1pt] at (6) {$2k$};
    \node[right=2pt] at (7) {$2$};
    \node[below=2pt] at (9) {$k$};
    \node[below left=3pt] at (10) {$k+1$};
    \draw (1) -- (2);
    \draw (2) -- (3);
    \draw (3) -- (1);
     \draw (1) -- (4);
    \draw (2) -- (4);
    \draw (3) -- (4);

    \draw (9) -- (1);   
    \draw (9) -- (4); 
    \draw (9) -- (3);

    \draw (10) -- (2);   
     \draw (10) -- (3); 
     \draw (10) -- (4);

    \draw (5) -- (3); 
   \draw (6) -- (3);   
    \draw (7) -- (5);   
    \draw (7) -- (3); 
    \draw (7) -- (4);
        \draw (8) -- (4);   
    \draw (8) -- (6); 
    \draw (8) -- (3);
    \draw[dotted, thick] (9) -- (7);
    \draw[dotted, thick] (10) -- (8);
\end{tikzpicture}
\small \textbf{$\mathcal{F}_3$} 
\end{minipage}
&
\begin{minipage}{0.25\textwidth}
\centering
     \begin{tikzpicture}[scale=1.1, every node/.style={inner sep=1pt},
    font=\fontsize{1}{7}\tiny]

    \node[circle, fill] (1) at (0,0) {};
    
    \node[circle, fill] (8) at (1,0) {};
    \node[circle, fill] (2) at (2,0) {};
    \node[circle, fill] (3) at (2.5,1.8) {};
    \node[circle, fill] (4) at (1,2.5) {};
    \node[circle, fill] (5) at (-0.5,1.8) {};
    \node[circle, fill] (6) at (1,1) {};

    \node[below=2pt] at (1) {$1$};
    
    \node[below=2pt] at (8) {$2k+1$};
    \node[below=2pt] at (2) {$5$};
    \node[right=2pt] at (3) {$4$};
    \node[above=2pt] at (4) {$3$};
    \node[left=2pt] at (5) {$2$};
    \node[right=5pt] at (6) {$2k+2$};

    \draw (2) -- (3) -- (4) -- (5) -- (1);

    \draw (6) -- (1);
    \draw (6) -- (2);
    \draw (6) -- (3);
    \draw (6) -- (4);
    \draw (6) -- (5);

    
    \draw (6) -- (8);
    
    \draw (1) -- (8);

    \draw[dotted, thick] (8) -- (2);

\end{tikzpicture}
    \small \textbf{$W_{2k+1}, k>1$}
    \end{minipage}
 \end{tabular}
    \caption{Forbidden induced subgraphs for word-representable near-triangulations; the last column contains infinite families.}
    \label{fig1}
\end{figure}







\section{Concluding Remarks}\label{con}
In this work, we have provided a complete characterization of word-representable near-triangulations. From another perspective, this characterization can also be viewed as identifying the induced subgraphs whose exclusion guarantees that closure property of word-representable \(W\)-components under clique gluing. In general, the class of word-representable graphs is not closed under gluing along cliques, which makes such a characterization particularly significant.

This viewpoint may also be relevant in the broader problem of characterizing word-representable planar graphs. Any planar graph containing a \(K_4\) admits a natural triangular separator given by the outer triangle of the \(K_4\) through the planar embedding of \(K_4\). Moreover, since the maximum clique size in a planar graph is \(4\), every clique separator in a planar graph has size at most $3$. Consequently, planar graphs can often be decomposed into \(W\)-components along clique separators of size at most $3$. Understanding when word-representability is preserved under such decompositions could therefore provide a direction toward characterizing word-representable planar graphs.

\bibliographystyle{plain}
\bibliography{WR-NT}
\end{document}